\renewcommand*\env@matrix[1][*\c@MaxMatrixCols c]{%
  \hskip -\arraycolsep
  \let\@ifnextchar\new@ifnextchar
  \array{#1}}
\newtheorem{thm}{Theorem}[section]
\newtheorem{lem}[thm]{Lemma}
\newtheorem{prop}[thm]{Proposition}
\theoremstyle{remark}
       \newtheorem{rmk}{Remark}
\theoremstyle{remark}
\newcommand*{\Scale}[2][4]{\scalebox{#1}{$#2$}}%
 \newcommand{\Orb}{\operatorname{Orb}}
  \newcommand{\Rat}{\operatorname{Rat}}
 \newcommand{\PostCrit}{\operatorname{PostCrit}} 
 \newcommand{\ord}{\operatorname{ord}}  
    \newcommand{\Res}{\operatorname{Res}}  
    \newcommand{\sep}{\operatorname{sep}} 
\newcommand{\mysetminus}{\mathbin{\fgebackslash}}
\title{Integrality estimates in orbits over function fields}
\author[Wade Hindes]{Wade Hindes}
\address{Department of Mathematics, The Graduate Center, City University of New York (CUNY); 365 Fifth Avenue, New York, NY 10016, USA}
\email{whindes@gc.cuny.edu}
\begin{document}
\maketitle
\begin{abstract} \normalsize We prove a version of Silverman's dynamical integral point theorem for a large class of rational functions defined over global function fields. 
\end{abstract}
\renewcommand{\thefootnote}{}


\section{Introduction} 
Let $K=\mathbb{F}_q(t)$, let $\phi(x)\in K(x)$ be a rational function of degree $d\geq2$, and let
\[\phi(x)=\frac{a_d(t)x^d+a_{d-1}(t)x^{d-1}+\dots a_0(t)}{b_d(t)x^d+b_{d-1}(t)x^{d-1}+\dots b_0(t)}.\] 
To every $\phi$ we associate an explicit $7\times 6$ matrix with coefficients in $K$:\\
\begin{equation*}
\hspace*{-1.05cm} 
M_\phi:=\Scale[0.5]{\begin{pmatrix}
    -a_d^2 &\dots & 0&0\\
     -2a_da_{d-1}& \dots & a_db_{d-1} - a_{d-1}b_d & 0 \\ 
   -2a_da_{d-2} - a_{d-1}^2 & \dots &2a_db_{d-2} - 2a_{d-2}b_d & a_db_{d-1}-a_{d-1}b_d \\
    -2a_da_{d-3} - 2a_{d-1}a_{d-2}& \dots  & 3a_db_{d-3} + a_{d-1}b_{d-2} - a_{d-2}b_{d-1} - 3a_{d-3}b_d& 2a_db_{d-2} - 2a_{d-2}b_d \\
-2a_da_{d-4} - 2a_{d-1}a_{d-3} - a_{d-2}^2 & \dots  & 4a_db_{d-4} + 2a_{d-1}b_{d-3} - 2a_{d-3}b_{d-1} - 4a_{d-4}b_d & 3a_db_{d-3} + a_{d-1}b_{d-2} - a_{d-2}b_{d-1} - 3a_{d-3}b_d \\
-2a_da_{d-5} - 2a_{d-1}a_{d-4} - 2a_{d-2}a_{d-3} & \dots & 5a_db_{d-5} + 3a_{d-1}b_{d-4} + a_{d-4}b_{d-3} - a_{d-3}b_{d-2} - 3a_{d-4}b_{d-1} - 5a_{d-5}b_d & 4a_db_{d-4} + 2a_{d-1}b_{d-3} - 2a_{d-3}b_{d-1} - 4a_{d-4}b_d \\
-2a_da_{d-6} - 2a_{d-1}a_{d-5} - 2a_{d-2}a_{d-4} - a_{d-3}^2& \dots & 6a_db_{d-6} + 4a_{d-1}b_{d-5} + 2a_{d-2}b_{d-4} - 2a_{d-4}b_{d-2} - 4a_{d-5}b_{d-1} - 6a_{d-6}b_d& \;\;5a_db_{d-5} + 3a_{d-1}b_{d-4} + a_{d-2}b_{d-3} - a_{d-3}b_{d-2} - 3a_{d-4}b_{d-1} - 5a_{d-5}b_d\;
\end{pmatrix}}
\end{equation*} 
\\ 
See the proof of Lemma \ref{lem:Riccati} for the three remaining columns. Our main result is the following:
\begin{thm}\label{thm:Silv} Let $K=\mathbb{F}_q(t)$ and suppose that $\phi(x)\in K(x)$ of degree $d\geq2$ has the following properties:  
\begin{enumerate}[topsep=6pt, partopsep=6pt, itemsep=3pt]
\item[\textup{(1)}] The overdetermined system of linear equations corresponding to the augmented matrix $(M_\phi|R_\phi)$, with column vector $R_\phi=(r_n)$ defined by  
\[r_n=\sum_{i=0}^na_{d-i}b_{d-n+i}'-\sum_{i=0}^nb_{d-i}a_{d-n+i}'\]
for $0\leq n\leq6$, is inconsistent (i.e. has no solutions).   
\item[\textup{(2)}] $\infty\not\in\PostCrit_\phi$, i.e. $\infty$ is not in the forward orbit of any of the critical points of $\phi$.     
\end{enumerate} 
Let $\alpha\in K$ be any wandering point for $\phi$, and write 
\[\phi^n(\alpha)=\frac{a_n(t)}{b_n(t)}\]
for some polynomials $a_n(t), b_n(t)\in\mathbb{F}_q[t]$ in lowest terms. Then 
\begin{equation*}{\label{silvlim}}
\limsup_{n\rightarrow\infty}\, \frac{\deg(a_n)}{\deg(b_n)}\leq 2.
\end{equation*}
In particular, $\Orb_\phi(\alpha):=\{\phi^n(\alpha)\}_{n\geq1}$ contains finitely many polynomials. Additionally, 
\begin{equation*}{\label{liminf}}
\frac{1}{2}\leq\liminf_{n\rightarrow\infty}\, \frac{\deg(a_n)}{\deg(b_n)}\leq\limsup_{n\rightarrow\infty}\, \frac{\deg(a_n)}{\deg(b_n)}\leq 2 \vspace{.1cm}
\end{equation*} 
whenever the rational function $1/\phi(1/x)$ also satisfies assumptions \textup{(1)} and \textup{(2)}.      
\end{thm}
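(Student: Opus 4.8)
The plan is to prove the $\limsup$ bound of the theorem and then obtain the $\liminf$ bound from it by conjugating with $x\mapsto 1/x$. Write $\gamma_n:=\phi^n(\alpha)$, $h_n:=h(\gamma_n)=\max(\deg a_n,\deg b_n)$, and let $\lambda_\infty(\,\cdot\,,P)$ denote the local height on $\Pone$ at the infinite place of $K$ relative to $P\in\Pone(\overline{K})$, normalized so that $\lambda_\infty(\gamma_n,\infty)=\max(0,\deg a_n-\deg b_n)=:\ell_n$. Since $\deg a_n/\deg b_n\le 1$ whenever $\deg a_n\le\deg b_n$, and $\deg b_n=h_n-\ell_n$ whenever $\deg a_n>\deg b_n$, once we know $h_n\to\infty$ the bound $\limsup_n\deg a_n/\deg b_n\le 2$ is equivalent to $\limsup_n \ell_n/h_n\le 1/2$, and this equivalence also gives $\deg b_n>0$ for $n\gg 0$, i.e.\ the finiteness statement. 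To see $h_n\to\infty$: assumption (1) forces $\phi$ to be non-isotrivial (by Lemma \ref{lem:Riccati}), so the Call--Silverman canonical height $\hat h_\phi$ has the Northcott property over $K$; a wandering $\alpha$ then has $\hat h_\phi(\alpha)>0$ and $h_n=d^n\hat h_\phi(\alpha)+O(1)$.

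The heart of the matter is a pull-back estimate modeled on Silverman's integral point theorem. Fix $m=2$ (any $m$ with $d^m\ge 4$ will do). We may restrict attention to those $n$ with $\ell_n$ larger than a constant depending only on $\phi$, since for the others $\ell_n/h_n\to 0$. For such $n$ we pull the $\infty$-adic closeness of $\gamma_n$ to the point $\infty$ back two steps along the orbit. By assumption (2), no critical point of $\phi$ maps to $\infty$ or to a point of $\phi^{-1}(\infty)$ (otherwise $\infty=\phi^k(\text{critical point})$ with $k\in\{1,2\}$), so $\phi$ is unramified over $\infty$ and over $\phi^{-1}(\infty)$; hence $\gamma_n$ being $\infty$-adically near $\infty$ forces $\gamma_{n-1}$ to be near a unique simple pole $\rho_1$ of $\phi$, and then $\gamma_{n-2}$ near a unique simple preimage $\rho_2\in\phi^{-1}(\rho_1)\subset\phi^{-2}(\infty)$. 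Telescoping the functional equation $\lambda_\infty(\phi(z),\phi(\rho))=e_\phi(\rho)\,\lambda_\infty(z,\rho)+O(1)$ through the two unramified steps $e_\phi(\rho_1)=e_\phi(\rho_2)=1$ gives $\ell_n=\lambda_\infty(\gamma_{n-2},\rho_2)+O(1)$ with $\rho_2\in\phi^{-2}(\infty)$.

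It remains to bound $\lambda_\infty(\gamma_{n-2},\rho_2)$, and here I split into cases. If $\rho_2\in\Pone(K)$, then $\phi^2(\rho_2)=\infty$ gives $\hat h_\phi(\rho_2)=\hat h_\phi(\infty)/d^2=O(1)$, hence $h(\rho_2)=O(1)$, and the elementary inequality $\lambda_\infty(P,Q)\le h(P)+h(Q)+O(1)$ yields $\lambda_\infty(\gamma_{n-2},\rho_2)\le h_{n-2}+O(1)$. If $\rho_2\notin\Pone(K)$, then $[K(\rho_2):K]\ge 2$ and a function-field analogue of Roth's theorem --- available precisely because the non-isotriviality from assumption (1) removes the characteristic-$p$ obstruction --- yields $\lambda_\infty(\gamma_{n-2},\rho_2)\le 2\,h_{n-2}+O(1)$. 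Either way, using $h_n=d^2h_{n-2}+O(1)$ we get
\[
\ell_n\ \le\ 2\,h_{n-2}+O(1)\ =\ \frac{2}{d^2}\,h_n+O(1)\ \le\ \tfrac12\,h_n+O(1),
\]
so $\limsup_n \ell_n/h_n\le 1/2$; this proves $\limsup_n\deg a_n/\deg b_n\le 2$ together with the finiteness claim.

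Finally, for the displayed chain of inequalities, apply what has just been proved to $\psi(x):=1/\phi(1/x)$ at the wandering point $1/\alpha$ (conjugation by $x\mapsto 1/x$ preserves being wandering, and $\psi$ satisfies (1) and (2) by hypothesis): writing $\psi^n(1/\alpha)$ in lowest terms, its numerator-to-denominator degree ratio has $\limsup\le 2$. But $\psi^n(1/\alpha)=1/\phi^n(\alpha)=b_n/a_n$ with $\gcd(a_n,b_n)=1$, so this says $\limsup_n\deg b_n/\deg a_n\le 2$, i.e.\ $\liminf_n\deg a_n/\deg b_n\ge 1/2$, which combined with the $\limsup$ bound is the asserted inequality chain. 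The two places where I expect the real difficulty are: (i) making the telescoping of local heights exact up to $O(1)$ in characteristic $p$, where wild ramification of the maps $t\mapsto\gamma_n(t)$ must be controlled using both assumptions (1) and (2); and (ii) pinning down and invoking the correct characteristic-$p$ version of Roth's theorem for the preimages $\rho_2\in\phi^{-2}(\infty)$, and checking that assumption (1) really does guarantee its hypotheses. Point (ii) looks like the genuine crux.
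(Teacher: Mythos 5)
Your outline correctly identifies the overall architecture (pull $\infty$-adic closeness of $\phi^n(\alpha)$ back $m$ steps along the orbit, bound the local height at a preimage of $\infty$ using Diophantine approximation, compare with the global height via the canonical-height functional equation, and reduce the $\liminf$ bound to the $\limsup$ bound by conjugating with $x\mapsto 1/x$). The reduction of the statement to $\limsup \ell_n/h_n \le 1/2$ and the use of assumption (2) to keep the pulled-back preimage unramified are both present in the paper.

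However, the core estimate is wrong, and the error is exactly the one you flag at the end as ``the genuine crux.'' You fix $m=2$ and invoke a Roth-type bound $\lambda_\infty(\gamma_{n-2},\rho_2)\le 2\,h_{n-2}+O(1)$ for the algebraic preimage $\rho_2\in\phi^{-2}(\infty)$, asserting that ``the non-isotriviality from assumption (1) removes the characteristic-$p$ obstruction.'' This is not available. Assumption (1) is a linear-algebraic condition about Riccati equations; via Lemma \ref{lem:Riccati} it implies only that $\beta$ and $\phi(\beta)$ cannot \emph{both} satisfy Riccati equations when $[K(\beta):K]>2d$, which is precisely the hypothesis needed to invoke \emph{Osgood's} theorem, not Roth. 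Osgood's theorem gives $E(\beta)\le\lfloor(d(\beta)+3)/2\rfloor$, not $E(\beta)\le 2$; no analogue of Roth's bound is known in characteristic $p$, with or without isotriviality assumptions (Mahler's example $\beta=\sum t^{-q^j}$, quoted in the paper, has $E(\beta)=q=d(\beta)$). With Osgood's bound at $m=2$ you would only obtain $\ell_n\le\frac{d^2+3}{2}\,h_{n-2}+O(1)=\left(\frac12+\frac{3}{2d^2}\right)h_n+O(1)$, which for $d=2$ gives $\limsup\deg(a_n)/\deg(b_n)\le 8$, not $\le 2$. The paper escapes this by letting $m\to\infty$ (more precisely, fixing $m\ge\max\{\log_d(3/2\epsilon^2)+1,4\}$ for each target $\epsilon$), so that the extra $3/(2d^m)$ in the Osgood factor $(d^m+3)/(2d^m)$ becomes negligible; this requires assumption (2) for \emph{all} $m$, not just $m\le 2$, and it requires splitting into cases (constant-field, small-degree via Liouville, large-degree via Osgood applied to whichever of $\beta_m$ or $\phi(\beta_m)$ fails Riccati, and $\beta_m=\infty$), which your two-case split omits. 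In short: replacing Roth with Osgood and making $m$ $\epsilon$-dependent is not a cosmetic fix but the actual content of the argument, and your write-up does not contain it.

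Two smaller points: the finiteness of polynomial values in the orbit requires $\deg b_n>0$ for $n\gg0$, which your reduction does deliver once $h_n\to\infty$, but the $h_n\to\infty$ step does not need any non-isotriviality claim---Northcott over $K=\mathbb{F}_q(t)$ and $\alpha$ wandering already force $\hat h_\phi(\alpha)>0$. Also, when you pull back two steps you should pass through the chordal metric and handle the $\rho(\beta_m,\infty)$ factors (Lemma \ref{lem:comp}), though this only costs $O(1)$ constants once $m$ is fixed.
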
 
We make the convention that any coefficients defining $M_\phi$ and $R_\phi$ with negative indices are zero. Moreover, we say that $\alpha\in K$ is a \emph{wandering point} for $\phi$ if the dynamical orbit $\Orb_\phi(\alpha):=\{\phi^n(\alpha)\}_{n\geq1}$ is an infinite set. For examples and computations related to Latt\'{e}s maps and elliptic curves, see Section \ref{sec:lattes}. 
\section{Diophantine Approximation Over Function Fields}
For number fields $K/\mathbb{Q}$ and rational functions $\phi(x)\in K(x)$, the main result on diophantine approximation of iterates is due to Silverman \cite{Silv-Int}. Loosely speaking, this result states that if $0$ and $\infty$ are not totally ramified fixed points of the second iterate of $\phi$ (in particular, we do not consider polynomials), then the numerator and denominator of the iterates of any wandering point $\alpha\in K$ have roughly the same height. We state this precisely over the rational numbers:   
\begin{thm}[Silverman]{\label{Silv-NF}} Let $\phi(x)\in\mathbb{Q}(x)$ be a rational map with the property that both $\phi^2(x)$ and $1/\phi^2(1/x)$ are not polynomials. Let  $\alpha\in\mathbb{Q}$ be a wandering point for $\phi$,  and write \[\phi^n(\alpha)=\frac{a_n}{b_n}\] for some integers $a_n,b_n\in\mathbb{Z}$ in lowest terms. Then 
\[\lim_{n\rightarrow\infty} \frac{\log|a_n|}{\log|b_n|}=1.\]
In particular, $\Orb_\phi(\alpha)$ contains only finitely many integers. 
\end{thm}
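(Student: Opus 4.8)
\section*{Proof proposal for Theorem \ref{Silv-NF}}

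The plan is to derive the theorem from Roth's theorem on Diophantine approximation together with the functoriality of canonical local heights, after rephrasing the hypotheses. Since $\phi^2$ is a polynomial exactly when $\phi^{-2}(\infty)=\{\infty\}$, i.e.\ when $\infty$ is a totally ramified fixed point of $\phi^2$ --- equivalently an \emph{exceptional} point of $\phi$ (one with finite backward orbit) --- and likewise $1/\phi^2(1/x)$ is a polynomial exactly when $0$ is exceptional, the assumption is precisely that neither $0$ nor $\infty$ lies in the exceptional set $E_\phi$, which always has at most two elements and is forward-invariant. Write $P_n=\phi^n(\alpha)$ and $h_n=h(P_n)=\log\max(|a_n|,|b_n|)$, and normalize local heights $\lambda_v(\cdot,Q)$ via the chordal metric on $\mathbb{P}^1$, so that $\lambda_v(\cdot,Q)\ge 0$ for every place $v$ and point $Q$. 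A direct computation with $P_n=[a_n:b_n]$ in lowest terms gives $\log|b_n|=\sum_{p\text{ finite}}\lambda_p(P_n,\infty)$ and $h_n=\log|b_n|+\lambda_\infty(P_n,\infty)$, and symmetrically $h_n=\log|a_n|+\lambda_\infty(P_n,0)$. Since $\alpha$ is wandering, $\hat{h}_\phi(\alpha)>0$, so $h_n=d^n\hat{h}_\phi(\alpha)+O(1)\to\infty$. Thus the theorem follows once we establish, for every non-exceptional $\beta$ and every $\epsilon>0$, the estimate $\lambda_\infty(P_n,\beta)\le\epsilon\, h_n+O_{\epsilon,\beta}(1)$ for all $n$: applying it to $\beta=\infty$ and $\beta=0$ yields $\log|b_n|\ge(1-\epsilon)h_n-O(1)$ and $\log|a_n|\ge(1-\epsilon)h_n-O(1)$, while trivially $\log|a_n|,\log|b_n|\le h_n$, so $\log|a_n|/\log|b_n|\to1$; and $\log|b_n|>0$ for $n$ large then shows $\Orb_\phi(\alpha)$ meets $\mathbb{Z}$ only finitely often.

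To prove the estimate, fix a large integer $m$ (to be chosen in terms of $\epsilon$). Since $\beta\notin E_\phi$, the fiber $\phi^{-m}(\beta)=\{Q_1,\dots,Q_r\}$ consists of $r$ distinct algebraic points, and functoriality of local heights gives, for $n\ge m$,
\[
\lambda_\infty(P_n,\beta)=\sum_{j=1}^{r}e_{Q_j}^{(m)}\,\lambda_\infty(P_{n-m},Q_j)+O_{m,\beta}(1),
\]
where $e_{Q_j}^{(m)}=\prod_{i=0}^{m-1}e_{\phi^i(Q_j)}(\phi)$ is the ramification index of $\phi^m$ at $Q_j$ and $\sum_j e_{Q_j}^{(m)}=d^m$. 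As all $\lambda_\infty$ are non-negative, the right side is at most $\bigl(\max_j e_{Q_j}^{(m)}\bigr)\sum_j\lambda_\infty(P_{n-m},Q_j)$, and Roth's theorem --- at the single archimedean place, with the finitely many distinct targets $Q_j$, valid for all but finitely many $n$ since $P_n=\beta$ for at most one $n$ --- bounds $\sum_j\lambda_\infty(P_{n-m},Q_j)\le(2+\epsilon)h(P_{n-m})+O_m(1)$. Finally $h(P_{n-m})=d^{-m}h_n+O_m(1)$ by the canonical-height relation, so
\[
\lambda_\infty(P_n,\beta)\le(2+\epsilon)\,\frac{\max_j e_{Q_j}^{(m)}}{d^m}\,h_n+O_m(1),
\]
and it suffices to choose $m$ so that $(2+\epsilon)\,d^{-m}\max_j e_{Q_j}^{(m)}<\epsilon$. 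Only classical Roth over $\mathbb{Q}$ is used here; Silverman's $S$-integer refinement for general number fields runs identically with Roth over a number field at the places of $S$.

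The crux --- which I expect to be the main obstacle --- is the purely dynamical input that $\max_j e_{Q_j}^{(m)}=o(d^m)$ as $m\to\infty$ whenever $\beta\notin E_\phi$. Here $e_Q^{(m)}$ is a product of $m$ ramification indices of $\phi$ taken along the orbit $Q\to\phi(Q)\to\cdots\to\phi^m(Q)=\beta$, and a factor equals $d$ exactly when $\phi^i(Q)$ is a totally ramified point of $\phi$; by Riemann--Hurwitz there are at most two such points. One checks that along the chain these totally ramified points can recur only by entering a periodic cycle, and that any such cycle must also contain a non-totally-ramified point --- otherwise it would be a totally ramified fixed point, or a $2$-cycle of two totally ramified points, which would force $\beta\in E_\phi$. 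Counting then shows at most $\tfrac{2}{3}m+O_\phi(1)$ of the $m$ factors can equal $d$ while the remaining factors are $\le d-1$, so $e_Q^{(m)}\le C_\phi\bigl(d^2(d-1)\bigr)^{m/3}$; since $d^2(d-1)<d^3$ this yields $\max_j e_{Q_j}^{(m)}/d^m\to 0$, and choosing $m$ large enough completes the proof.
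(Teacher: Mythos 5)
The paper does not actually prove Theorem \ref{Silv-NF}: it is quoted as background and attributed to Silverman \cite{Silv-Int} (see also \cite[Theorem 3.43]{Silv-Dyn}). Your argument is correct, and it is essentially Silverman's original proof rather than a new route: the reduction of the hypotheses to ``$0,\infty$ are not exceptional,'' the identities $h_n=\log|b_n|+\lambda_\infty(P_n,\infty)=\log|a_n|+\lambda_\infty(P_n,0)$, the pullback formula $\lambda_\infty(\phi^m(P),\beta)=\sum_j e^{(m)}_{Q_j}\lambda_\infty(P,Q_j)+O(1)$, Roth applied at the archimedean place to the finitely many points of $\phi^{-m}(\beta)$, the canonical-height relation $h(P_{n-m})=d^{-m}h_n+O_m(1)$, and finally a choice of large $m$ are exactly the steps in \cite{Silv-Int}; your ``crux'' lemma, that $\max_{Q\in\phi^{-m}(\beta)}e_Q(\phi^m)=o(d^m)$ for non-exceptional $\beta$, is Silverman's key ramification lemma (cf.\ \cite[Lemma 3.52]{Silv-Dyn}), and your counting proof of it (totally ramified points recur only inside a cycle, an all-totally-ramified cycle forces exceptionality, hence at most $\tfrac{2}{3}m+O(1)$ factors equal $d$) is a clean and valid version of the standard argument. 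It is worth noting how this template is modified in the present paper's Section \ref{sec:Main} for $K=\mathbb{F}_q(t)$: since Roth fails in positive characteristic, the exponent $2+\epsilon$ is replaced by the Liouville/Mahler bound \cite{Mahler} and Osgood's bound \cite{Osgood}, which only give $\tfrac{1}{2}(d(\beta_m)+3)$; correspondingly, the soft ramification lemma is replaced by the stronger standing hypothesis $\infty\notin\PostCrit_\phi$ (making the fibers $\phi^{-m}(\infty)$ unramified), and hypothesis (1) (the Riccati/linear-algebra condition and Lemma \ref{lem:Riccati}) is needed to dispose of Osgood's exceptional case, which has no analogue over $\mathbb{Q}$ --- this is also why the function-field conclusion is only $\limsup\le 2$ rather than a limit equal to $1$.
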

\begin{rmk} Theorem \ref{Silv-NF} generalizes a result of Siegel \cite{Siegel} regarding the height of the numerator and denominator of the $x$-coordinate of large point on an elliptic curve \cite[Theorem 3.39]{Silv-Dyn}.    
\end{rmk} 
The main obstruction to generalizing Silverman's proof in characteristic zero to positive characteristic is the failure of Roth's theorem. For instance, it was known to Mahler \cite{Mahler} that the algebraic function $\beta=\sum_{j=0}^\infty t^{-q^j}$, satisfying the equation $\beta^q-\beta-t^{-1}=0$, has a large diophantine approximation exponent: \vspace{.075cm}   
\[E(\beta):=\limsup\Big(-\frac{\log_q|\beta-P/Q|}{\log_q|Q|}\,\Big)=q=d(\beta):=[K(\beta):K];\] 
here $P, Q\in \mathbb{F}_q[t]$ and $|\cdot|$ is an extension of the absolute $|P/Q|:=q^{\deg(P)-\deg(Q)}$ to $K(\beta)$. In particular, such $\beta$ never satisfies the Roth Bound, i.e. $E(\beta)=2$, in odd characteristic. Nevertheless, any improvement of the Liouville bound $E(\beta)\leq d(\beta)$ often leads to nontrivial arithmetic consequences, and we exploit this principal here to prove our main result. 

The first such improvement in characteristic $p$ is due to Osgood \cite[Theorem 3]{Osgood}:
\begin{thm}[Osgood]{\label{Osgood}} Let $K=\mathbb{F}_q(t)$ and let $\beta\in\overline{K}\mysetminus \overline{\mathbb{F}}_q(t)$. Then   
\[ E(\beta)\leq \big\lfloor\frac{1}{2}(d(\beta)+3)\big\rfloor,\]      
unless $\beta\in K^{\sep}$ and $\beta$ satisfies:  
\begin{equation}{\label{eq:Ricatti}}
\beta'=a\beta^2+b\beta+c, \;\;\; \text{for some}\;\, a,b,c\in K;  
\end{equation} 
such a differential equation is called a generalized Riccati equation:
\end{thm}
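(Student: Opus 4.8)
The plan is to follow a differential refinement of Liouville's inequality: besides the bound $E(\beta)\leq d(\beta)$ coming from the fact that $Q^d F(P/Q)$ is a nonzero polynomial, one squeezes out an extra factor of (essentially) two by exploiting the derivation $d/dt$ on $\overline{K}$.

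First I would dispose of the easy cases. If $\beta\notin K^{\sep}$, a direct argument (raise a putative good approximation to an appropriate $p$-power, clear denominators, and use that the resulting nonzero polynomial has absolute value $\geq 1$) shows that $E(\beta)$ is far smaller than $\lfloor\tfrac12(d(\beta)+3)\rfloor$; so we may assume $\beta\in K^{\sep}$. Also, if $d:=d(\beta)\leq 3$ there is nothing to prove: then $K(\beta)$ is a $K$-vector space of dimension $\leq 3$, so $\beta'\in K(\beta)$ already lies in $\langle 1,\beta,\beta^2\rangle$, which is a generalized Riccati equation, and in any case $\lfloor\tfrac12(d+3)\rfloor\geq d$ there, so the claimed bound is just Liouville's. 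So assume $d\geq 4$. Let $F(x)$ be the minimal polynomial of $\beta$, cleared so that $F\in\mathbb{F}_q[t][x]$ is primitive of degree $d$; differentiating $F(\beta)=0$ and using $F'(\beta)\neq 0$ yields a first-order differential equation $\beta'=G(\beta)$ with $G\in K(x)$ of degree $<d$, and reducing modulo $F$ exactly as in the proof of Lemma~\ref{lem:Riccati} we may write $\beta'=\sum_{i=0}^{d-1}g_i\beta^i$ with $g_i\in K$. One observes that $\beta$ satisfies \eqref{eq:Ricatti} precisely when $g_i=0$ for all $i\geq 3$. Iterating, every derivative $\beta^{(j)}$ equals $G_j(\beta)$ for an explicit $G_j\in K[x]$ of degree $\leq d-1$, obtained by repeatedly differentiating $\beta'=G(\beta)$ and reducing mod $F$.

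Now let $P/Q\in K$ be in lowest terms and $\epsilon:=|\beta-P/Q|$; assume $\epsilon$ small. The starting estimate is that $Q^d F(P/Q)$ is a nonzero element of $\mathbb{F}_q[t]$ of $t$-degree $\leq d\deg(Q)+O(1)$ with $|Q^d F(P/Q)|\leq c_\beta\,\epsilon\,|Q|^d$, which by itself only gives $E(\beta)\leq d$. To do better, I would differentiate: the quantities $\Theta_j:=(P/Q)^{(j)}-G_j(P/Q)$ vanish when $P/Q$ is replaced by $\beta$, hence $|\Theta_j|\leq c_\beta\,\epsilon$, and after clearing denominators each $\Theta_j$ becomes a polynomial in $t$ of controlled degree. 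The heart of the argument is to combine $F(P/Q)\approx 0$ with these derived relations inside the differential ring $K[x]/(F)$ so as to produce a single nonzero element of $\mathbb{F}_q[t]$ of $t$-degree at most $\lfloor\tfrac12(d+3)\rfloor\deg(Q)+O(1)$ whose absolute value is still $\leq c'_\beta\,\epsilon\,|Q|^{\lfloor(d+3)/2\rfloor}$; since such an element has absolute value $\geq 1$, this forces $\epsilon\geq (c'_\beta)^{-1}|Q|^{-\lfloor(d+3)/2\rfloor}$, i.e. $E(\beta)\leq\lfloor\tfrac12(d+3)\rfloor$. The non-Riccati hypothesis enters exactly here: the construction degenerates — the relevant Wronskian-type determinant built from $F$, $F'$ and the derived relations becomes identically zero — precisely when the derivation on $K[x]/(F)$ stabilizes the flag $\langle 1,x,x^2\rangle$, which is the condition $g_i=0$ for $i\geq 3$, i.e. \eqref{eq:Ricatti}.

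The main obstacle is the ``halving'': getting the $t$-degree of the auxiliary polynomial down to $\sim\tfrac12(d+3)\deg(Q)$ rather than the $\sim d\deg(Q)$ of naive Liouville — indeed a crude resultant of $F$ with its derived relation makes the exponent worse, not better. This requires a genuine Thue--Siegel construction inside $K[x]/(F)$: build an auxiliary polynomial of small bidegree in $x,\partial x,\partial^2 x,\dots$ vanishing to high order along $\beta$ (existence by a Siegel-lemma dimension count), prove a non-vanishing/index estimate at a good approximation that invokes non-Riccati through a Wronskian, and balance the parameters. The extra bookkeeping forced by characteristic $p$ — high derivatives of monomials can vanish because binomial coefficients are divisible by $p$ — is what makes this delicate and is ultimately responsible for the precise shape $\lfloor\tfrac12(d+3)\rfloor$ of the exponent. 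When $\beta$ does satisfy \eqref{eq:Ricatti} this method yields only $E(\beta)\leq d$, consistent with the theorem making no claim in that case beyond what Liouville already gives.
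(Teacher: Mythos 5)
This statement is not proved in the paper at all: it is Osgood's theorem, imported verbatim with the citation \cite[Theorem 3]{Osgood}, so there is no internal proof to compare yours against. Judged on its own, your write-up is an outline rather than a proof. The preliminary reductions are fine (the inseparable case via extracting a $p$-power, the trivial range $d(\beta)\leq 3$, and the reduction $\beta'=\sum_{i\le d-1}g_i\beta^i$ with the observation that the Riccati condition is exactly $g_i=0$ for $i\geq 3$), but the entire content of the theorem is the ``halving'' step, and there you only describe a program: ``build an auxiliary polynomial of small bidegree\dots (existence by a Siegel-lemma dimension count), prove a non-vanishing/index estimate\dots that invokes non-Riccati through a Wronskian, and balance the parameters.'' No auxiliary polynomial is constructed, no index or non-vanishing estimate is proved, no parameter choice is exhibited that actually produces the exponent $\lfloor\frac12(d+3)\rfloor$, and the pivotal claim that the relevant Wronskian degenerates precisely when the derivation stabilizes $\langle 1,x,x^2\rangle$ is asserted, not demonstrated. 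That claim is where all the difficulty sits, so as it stands the argument has a genuine gap at its core.

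There is also a strategic concern with the route you propose. A Thue--Siegel--Roth-type construction with an index and a Roth/Dyson-style non-vanishing lemma is exactly the machinery that is known to break in characteristic $p$ --- the paper itself recalls Mahler's example $\beta=\sum t^{-q^j}$ with $E(\beta)=q=d(\beta)$, which rules out any argument that would apply uniformly to all separable $\beta$ --- so your scheme must locate, in a precise and provable way, where the non-Riccati hypothesis rescues the non-vanishing step; this is not a bookkeeping issue about binomial coefficients divisible by $p$ but the heart of the matter. Osgood's actual proof does not run through a Siegel-lemma auxiliary construction at all: it works with the first-order algebraic differential equation satisfied by $\beta$ and differential-algebraic (Kolchin-style) arguments, with the Riccati equation emerging as the exceptional class for structural reasons. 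One minor point in your favor: the inference $|\Theta_j|\le c_\beta\,\epsilon$ does hold at the place $v=\ord_\infty$ because differentiation does not decrease the $\infty$-adic valuation, but you should say so, since for a general rational function closeness of values does not by itself control closeness of derivatives.
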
 
\begin{rmk} For $f\in K^{\sep}$, we define the derivative $f'\in K^{\sep}$ by extending the usual derivative $\frac{d}{dt}$ on $K$ via implicit differentiation.   
\end{rmk}    
Therefore, the technical condition (1) of Theorem \ref{thm:Silv} comes from our attempt to grapple with iterated preimages of infinity satisfying a generalized Riccati equation. To that end, we have the following fundamental lemma: 
\begin{lem}{\label{lem:Riccati}} Let $K=\mathbb{F}_q(t)$, and let $\phi(x)\in K(x)$ satisfy the following assumption:  
\begin{enumerate}[topsep=6pt, partopsep=6pt, itemsep=3pt]
\item[\textup{(1)}] The overdetermined system of linear equations corresponding to the augmented matrix $(M_\phi|R_\phi)$, with column vector $R_\phi=(r_n)$ defined by  
\[r_n=\sum_{i=0}^na_{d-i}b_{d-n+i}'-\sum_{i=0}^nb_{d-i}a_{d-n+i}'\]
for $0\leq n\leq6$, is inconsistent (i.e. has no solutions).     
\end{enumerate} 
If $\beta\in K^{\sep}$ is such that $\beta$ and $\phi(\beta)$ both satisfy a Riccati equation, then $[K(\beta):K]\leq 2d$.  
\end{lem}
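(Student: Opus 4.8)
The plan is to make the substitution/compatibility between the two Riccati equations — for $\beta$ and for $\gamma := \phi(\beta)$ — completely explicit, so that the obstruction to their coexistence becomes a linear-algebra condition on the coefficients of $\phi$, and that condition is exactly the inconsistency of $(M_\phi\mid R_\phi)$.

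First I would write $\gamma = \phi(\beta) = A(\beta)/B(\beta)$ with $A(x)=\sum a_{d-i}x^{d-i}$, $B(x)=\sum b_{d-i}x^{d-i}$. Differentiating the relation $\gamma B(\beta)=A(\beta)$ with respect to $t$ (using the extended derivative on $K^{\sep}$) gives
\[
\gamma' B(\beta) + \gamma\bigl(B^\partial(\beta) + B_x(\beta)\beta'\bigr) \;=\; A^\partial(\beta) + A_x(\beta)\beta',
\]
where $A^\partial, B^\partial$ denote differentiation of the coefficients only, and $A_x,B_x$ the $x$-derivatives. Now substitute $\beta' = a\beta^2+b\beta+c$ and $\gamma' = a^*\gamma^2 + b^*\gamma + c^*$ (the two Riccati equations), and replace the leftover $\gamma$'s using $\gamma = A(\beta)/B(\beta)$, clearing the denominator $B(\beta)^2$. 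This produces a single polynomial identity $F(\beta)=0$ in $\beta$ with coefficients in $K$ (polynomial in $a,b,c,a^*,b^*,c^*$ and in the $a_i,b_i,a_i',b_i'$). The degree of $F$ in $x$ is bounded — roughly $2d+\text{const}$ — so \emph{either} $[K(\beta):K]\le 2d$ (plus a small slack one absorbs, or one sharpens the degree count to get exactly $2d$), \emph{or} $F$ is the zero polynomial, i.e. all of its coefficients vanish.

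The heart of the argument is the second case: if $F\equiv 0$, then in particular the coefficients of the top several powers of $x$ must vanish, and I claim these give the linear system encoded by $M_\phi$. The key observation is that $a^*,b^*,c^*$ enter $F$ linearly (through $\gamma' = a^*\gamma^2+b^*\gamma+c^*$, and $\gamma^2, \gamma, 1$ become $A^2/B^2, A/B, 1$), while the "known" part — coming from $\gamma\bigl(B^\partial(\beta)+B_x(\beta)\beta'\bigr) - A^\partial(\beta) - A_x(\beta)\beta'$ together with the $\beta'$-substitution in $A_x,B_x$ — contributes the inhomogeneous terms. Matching coefficients of $x^{2d}, x^{2d-1},\dots, x^{2d-6}$ (seven equations, since $M_\phi$ is $7\times 6$) yields the overdetermined linear system $M_\phi \mathbf{v} = R_\phi$ in the six unknowns $\mathbf{v}$; here the columns not displayed in the excerpt are precisely the coefficient columns of $a^*, b^*, c^*$ (and of $a,b,c$, or a reduced combination thereof) that I would compute explicitly by carrying out this coefficient extraction — this is the "three remaining columns" promised after the matrix, and the vector $R_\phi$ with $r_n = \sum_i a_{d-i}b_{d-n+i}' - \sum_i b_{d-i}a_{d-n+i}'$ is exactly the Wronskian-type combination that falls out of $A^\partial B - A B^\partial$ when one clears $B^2$. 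Thus $F\equiv0$ forces $(M_\phi\mid R_\phi)$ to be consistent, contradicting assumption (1); hence $F$ is not identically zero and $\beta$ has bounded degree over $K$.

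I expect the main obstacle to be the bookkeeping in the coefficient extraction: correctly tracking how $\beta'=a\beta^2+b\beta+c$ interacts with $A_x(\beta)$ and $B_x(\beta)$ (which raise the degree), making sure the degree bound comes out as exactly $2d$ rather than something slightly larger, and verifying that the top seven coefficients of $F$ depend on the six unknowns only through the matrix $M_\phi$ as written — i.e. that no unknown disappears or collapses and that the inhomogeneous terms are exactly the $r_n$. A secondary subtlety is the inseparability/characteristic-$p$ issue: one must ensure $\beta\in K^{\sep}$ is genuinely used (so that the implicit derivative is well-defined and the minimal polynomial of $\beta$ is separable, making "degree of $F$ bounds $[K(\beta):K]$" valid), and handle the degenerate possibility that $B(\beta)=0$ or that $\phi$ has lower-degree numerator/denominator, where the convention "coefficients with negative index are zero" and a direct check dispose of the edge cases.
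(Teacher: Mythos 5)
Your proposal follows the paper's own proof essentially step for step: differentiate the relation $\phi(\beta)G(\beta)=F(\beta)$, substitute both Riccati equations together with $\phi(\beta)=F(\beta)/G(\beta)$, clear the denominator to obtain a polynomial of degree at most $2d$ vanishing at $\beta$, and observe that if this polynomial vanishes identically then the Riccati coefficient vector solves the linear system $(M_\phi\mid R_\phi)$, contradicting hypothesis (1). The only small slips are that one clears the denominator $B(\beta)$ once (not $B(\beta)^2$), and one should confirm the exact degree bound of $2d$ by checking that the $x^{2d-1}$ coefficient of $A B_x - B A_x$ (the paper's $GF_2-FG_2$) cancels, so that multiplying by $\beta^2$ lands in degree $2d$ rather than $2d+1$; both points the paper handles explicitly, along with the edge case $G(\beta)=0$, which already gives $[K(\beta):K]\le d$.
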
 
\begin{proof} Let $\phi(x)=F(x)/G(x)$ for some polynomials $F,G\in K[x]$ and suppose that $\beta\in K^{\sep}$ is such that
\begin{equation}{\label{diff1}}
\beta'=a\beta^2+b\beta+c\;\;\; \text{and}\;\;\; \phi(\beta)'=e\phi(\beta)^2+f\phi(\beta)+g
\end{equation} 
for some $a,b,c,e,f,g\in K$, i.e. assume that $\beta$ and $\phi(\beta)$ both satisfy a Riccati equation.

Note that we may assume that $G(\beta)\neq0$, since otherwise $[K(\beta):K]\leq d$ and we are finished. On the other hand, if $G(\beta)\neq0$ and $\beta\in K^{\sep}$, then we can differentiate the expression $F(\beta)=\phi(\beta)\cdot G(\beta)$ via the product rule to obtain 
\begin{equation}{\label{diff2}} 
F_1(\beta)+F_2(\beta)\cdot \beta'=\phi(\beta)\cdot\big[G_1(\beta)+G_2(\beta)\cdot\beta'\big]+G(\beta)\cdot\phi(\beta)'\,;
\end{equation}    
here, the polynomials $F_1$ and $F_2$, associated to $F(x)=a_dx^d+\dots +a_1x+a_0$, are defined by 
\[F_1(x)=a_d'x^d+\dots+a_1'x+ a_0'\,\;\;\;\;\text{and}\;\;\;\;\, F_2(x)=da_dx^{d-1}+(d-1)a_{d-1}x^{d-2}+\dots+ a_1,\]
and we define $G_1$ and $G_2$ (associated to $G$) in a similar manner. In particular, after substituting $\phi(\beta)=F(\beta)/G(\beta)$ and the differential equations in (\ref{diff1}) into (\ref{diff2}), we see that  \vspace*{.2cm}
\begin{gather}{\label{poly}}
\begin{split} 
\big[G(\beta)F_2(\beta)-F(\beta)G_2(\beta)\big]&(e\beta^2+f\beta+g)-F(\beta)G_1(\beta)+G(\beta)F_1(\beta)\\[5pt]
&- aF(\beta)^2-bF(\beta)G(\beta)-cG(\beta)^2=0,
\end{split} 
\end{gather}
and we obtain a polynomial $P_{\phi,\beta}\in K[x]$ that vanishes at $\beta$. One easily verifies that the $x^{2d+1}$ term above vanishes, so that $P_{\phi,\beta}$ is a polynomial of degree $\leq 2d$. Therefore, it suffices to show that $P_{\phi,\beta}$ is non-zero, to prove the lemma. To do this, we assume for a contradiction that $P_{\phi,\beta}$ is the zero polynomial. 

Now let us view the Riccati coefficients $(a,b,c,e,f,g)$ as indeterminates, and let $E_i(\phi)\in K[a,b,c,e,f,g]$ be the coefficient of $x^{2d-i}$ in the polynomial $P_{\phi}\in K[a,b,c,e,f,g,x]$ defined by (\ref{poly}) for $0\leq i\leq2d$; that is, $P_{\phi,\beta}\in K[x]$ is the polynomial in one variable obtained from $P_\phi$ via specializing the specific values $v_\beta:=(a,b,c,e,f,g)$ coming from the Riccati equations in (\ref{diff1}). In particular, each $E_i(\phi)$ is \emph{linear} in the variables $(a,b,c,e,f,g)$. Therefore, if $P_{\phi,\beta}$ is identically zero, we see that the vector $v_\beta\in K^6$ is a solution to the system of equations $E_i(\phi)=0$ for all $0\leq i\leq2d$.  

Of course, it is likely that for a generic $\phi$ of degree $d\geq4$, this system of equations has $\emph{no solutions}$, since the number of equations exceeds the number of variables. However, most $E_i(\phi)$ become quite complicated in large degree, and so in the interest of being as explicit as possible, we ignore all but the first seven equations, $E_i(\phi)=0$ for all $0\leq i\leq 6$. They are given as follows: \vspace{.15cm}
\begin{align*}
\begin{split} 
     E_0(\phi) :{}&  (-a_{d}^2)a + (-a_{d}b_{d})b + (-b_{d}^2)c + (a_{d}b_{d-1} - a_{d-1}b_{d})e =r_0. 
\end{split}\\[7pt]
\begin{split}
    E_1(\phi) :{}&(-2a_{d}a_{d-1})a + (-a_{d}b_{d-1} - a_{d-1}b_{d})b + (-2b_{d}b_{d-1})c + (2a_{d}b_{d-2} - 2a_{d-2}b_{d})e \\
&+ (a_{d}b_{d-1} - a_{d-1}b_{d})f =r_1.
\end{split}\\[7pt]
\begin{split} 
     E_2(\phi) :{} &(-2a_{d}a_{d-2} - a_{d-1}^2)a + (-a_{d}b_{d-2} - a_{d-1}b_{d-1} - a_{d-2}b_{d})b\\
&+ (-2b_{d}b_{d-2} - b_{d-1}^2)c + (3a_{d}b_{d-3} + a_{d-1}b_{d-2} - a_{d-2}b_{d-1} - 3a_{d-3}b_{d})e\\ 
&+ (2a_{d}b_{d-2} - 2a_{d-2}b_{d})f  + (a_{d}b_{d-1} - a_{d-1}b_{d})g= r_2.
\end{split}\\[7pt]
\begin{split}
     E_3(\phi) :{}& (-2a_da_{d-3} - 2a_{d-1}a_{d-2})a + (-a_db_{d-3} - a_{d-1}b_{d-2} - a_{d-2}b_{d-1} - a_{d-3}b_d)b \\ 
     & + (-2b_db_{d-3} -2b_{d-1}b_{d-2})c+ (4a_db_{d-4} + 2a_{d-1}b_{d-3} - 2a_{d-3}b_{d-1} - 4a_{d-4}b_d)e\\ 
     & + (3a_db_{d-3} + a_{d-1}b_{d-2} -a_{d-2}b_{d-1} - 3a_{d-3}b_d)f + (2a_db_{d-2} - 2a_{d-2}b_d)g = r_3
\end{split}\\[7pt]
\begin{split}
     E_4(\phi) :{}& (-2a_da_{d-4} - 2a_{d-1}a_{d-3} - a_{d-2}^2)a + (-a_db_{d-4} - a_{d-1}b_{d-3} - a_{d-2}b_{d-2} - a_{d-3}b_{d-1}\\ 
     & - a_{d-4}b_d)b + (-2b_db_{d-4} - 2b_{d-1}b_{d-3} - b_{d-2}^2)c + (5a_db_{d-5} + 3a_{d-1}b_{d-4} + a_{d-2}b_{d-3}\\ 
     & - a_{d-3}b_{d-2}- 3a_{d-4}b_{d-1} - 5a_{d-5}b_d)e + (4a_db_{d-4} + 2a_{d-1}b_{d-3} - 2a_{d-3}b_{d-1} - 4a_{d-4}b_d)f \\ 
     & + (3a_db_{d-3} + a_{d-1}b_{d-2} - a_{d-2}b_{d-1} - 3a_{d-3}b_d)g= r_4.    
\end{split}\\[7pt]
\begin{split}
     E_5(\phi) :{}& (-2a_da_{d-5} - 2a_{d-1}a_{d-4} - 2a_{d-2}a_{d-3})a + (-a_db_{d-5} - a_{d-1}b_{d-4} - a_{d-2}b_{d-3} - a_{d-3}b_{d-2}\\ 
     &- a_{d-4}b_{d-1} -a_{d-5}b_d)b + (-2b_db_{d-5} - 2b_{d-1}b_{d-4} - 2b_{d-2}b_{d-3})c + (6a_db_{d-6} + 4a_{d-1}b_{d-5} \\
     & + 2a_{d-2}b_{d-4} - 2a_{d-4}b_{d-2} - 4a_{d-5}b_{d-1} - 6a_{d-6}b_d)e + (5a_db_{d-5} + 3a_{d-1}b_{d-4} + a_{d-2}b_{d-3}\\ 
     & - a_{d-3}b_{d-2} - 3a_{d-4}b_{d-1} - 5a_{d-5}b_d)f + (4a_db_{d-4} + 2a_{d-1}b_{d-3} - 2a_{d-3}b_{d-1} - 4a_{d-4}b_d)g\\ 
     & = r_5.  
\end{split}\\[7pt]
\begin{split}
E_6(\phi) :{} & (-2a_da_{d-6} - 2a_{d-1}a_{d-5} - 2a_{d-2}a_{d-4} - a_{d-3}^2)a + (-a_{d}b_{d-6} - a_{d-1}b_{d-5} - a_{d-2}b_{d-4}\\ 
      & - a_{d-3}b_{d-3} - a_{d-4}b_{d-2} - a_{d-5}b_{d-1} - a_{d-6}b_d)b + (-2b_db_{d-6} - 2b_{d-1}b_{d-5} - 2b_{d-2}b_{d-4}\\ 
      & - b_{d-3}^2)c + (7a_db_{d-7} + 5a_{d-1}b_{d-6} + 3a_{d-2}b_{d-5} + a_{d-3}b_{d-4} - a_{d-4}b_{d-3} - 3a_{d-5}b_{d-2}\\ 
      &- 5a_{d-6}b_{d-1}- 7a_{d-7}b_d)e+ (6a_db_{d-6}+ 4a_{d-1}b_{d-5} + 2a_{d-2}b_{d-4} - 2a_{d-4}b_{d-2} - 4a_{d-5}b_{d-1}\\ 
      &- 6a_{d-6}b_d)f+ (5a_db_{d-5} + 3a_{d-1}b_{d-4} + a_{d-2}b_{d-3}- a_{d-3}b_{d-2} - 3a_{d-4}b_{d-1} - 5a_{d-5}b_d)g= r_6.  
\end{split}      
\end{align*}
Here the $r_n\in K$ for $0\leq n\leq 6$ are defined as in Lemma \ref{lem:Riccati}. In particular, since this system of equations, corresponding to the augmented matrix $(M_\phi| R_\phi)$, is assumed to be inconsistent, there can be no solutions $(a,b,c,e,f,g)\in K^6$. Hence, the polynomial $P_{\phi,\beta}\in K[x]$ is non-zero and $[K(\beta):K]\leq2d$ as claimed.         
\end{proof}   
Now let $K_v$ be a completion of the absolute value $|\cdot|$ on $K$ corresponding to the valuation $v=\ord_\infty$ on $K$. We can associate the \emph{chordal metric} $\rho:\mathbb{P}^1(\overline{K}_v)\times\mathbb{P}^1(\overline{K}_v) \rightarrow [0,1]$ given by 
\[\rho\big([X_1,Y_1], [X_2,Y_2]\big)=\frac{|X_1Y_2-X_2Y_1|}{\max\{|X_1|,|Y_1|\}\, \max\{|X_2|,|Y_2|\}}\] 
A nice property that we use in the proof of Theorem \ref{thm:Silv} is that rational maps are Lipschitz with respect to the chordal metric: 
\begin{lem}{\label{lem:lip}} Let $\phi(x)\in K(x)$ be a rational map. Then $\phi$ is Lipschitz with respect to the chordal metric. Specifically,  
\[\rho(\phi(P_1),\phi(P_2))\leq |\Res(\phi)|^{-2}\,\rho(P_1,P_2)\;\;\;\;\; \text{for all}\;\, P_1,P_2\in\mathbb{P}^1(\overline{K}_v);\]
here $\Res(\phi)$ is the resultant of $\phi$; see \cite[\S2.4]{Silv-Dyn} for a definition.    
\end{lem}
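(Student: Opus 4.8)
The plan is to reduce the inequality to an explicit non-archimedean estimate in homogeneous coordinates, using the classical identities satisfied by the resultant; nothing in the argument is sensitive to the characteristic of $K$, so it runs exactly as over a number field. First I would write $\phi=[F:G]$ for homogeneous forms $F,G\in K[X,Y]$ of degree $d$ and rescale by a common factor in $K^\times$ so that the coefficients of $F$ and $G$ lie in the valuation ring $\mathcal{O}_v\subset K_v$ with at least one of them a $v$-adic unit; since $\deg\phi=d$, the forms $F$ and $G$ are coprime, so $\Res(\phi)=\Res(F,G)\in\mathcal{O}_v\setminus\{0\}$. Then, given $P_1,P_2\in\mathbb{P}^1(\overline{K}_v)$, I would pick coordinates $P_i=[X_i:Y_i]$ with $X_i,Y_i$ in the valuation ring of $\overline{K}_v$ and $\max\{|X_i|,|Y_i|\}=1$, so that $\rho(P_1,P_2)=|X_1Y_2-X_2Y_1|$ and $\phi(P_i)=[F(X_i,Y_i):G(X_i,Y_i)]$. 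It then suffices to bound the numerator and the denominator of $\rho(\phi(P_1),\phi(P_2))$ separately.

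For the numerator, I would observe that the polynomial $F(X_1,Y_1)G(X_2,Y_2)-F(X_2,Y_2)G(X_1,Y_1)$ vanishes identically whenever $(X_1,Y_1)$ and $(X_2,Y_2)$ are proportional — because $F$ and $G$ are homogeneous of the same degree $d$ — hence it is divisible by $X_1Y_2-X_2Y_1$, the quotient being bihomogeneous of bidegree $(d-1,d-1)$ with coefficients in $\mathbb{Z}[a_0,\dots,a_d,b_0,\dots,b_d]$, hence in $\mathcal{O}_v$. Evaluating at $(P_1,P_2)$ and using the ultrametric inequality together with $|X_i|,|Y_i|\le 1$ gives
\[
|F(P_1)G(P_2)-F(P_2)G(P_1)|\le|X_1Y_2-X_2Y_1|=\rho(P_1,P_2).
\]

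For the denominator, I would invoke the fact that the resultant lies effectively and integrally in the ideal $(F,G)$: there are homogeneous forms $F_1,G_1,F_2,G_2$ of degree $d-1$ with coefficients in $\mathcal{O}_v$ satisfying $F_1F+G_1G=\Res(\phi)\,X^{2d-1}$ and $F_2F+G_2G=\Res(\phi)\,Y^{2d-1}$. Evaluating the first identity at $P_i$, the ultrametric inequality together with $|F_1(P_i)|,|G_1(P_i)|\le 1$ gives $|\Res(\phi)|\,|X_i|^{2d-1}\le\max\{|F(P_i)|,|G(P_i)|\}$, and the second identity gives the same bound with $Y_i$; taking the larger of the two and using $\max\{|X_i|,|Y_i|\}=1$ yields $\max\{|F(P_i)|,|G(P_i)|\}\ge|\Res(\phi)|$ for $i=1,2$. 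Dividing the numerator bound by the product of these two lower bounds gives exactly $\rho(\phi(P_1),\phi(P_2))\le|\Res(\phi)|^{-2}\,\rho(P_1,P_2)$.

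The single non-formal ingredient is the pair of resultant identities $F_1F+G_1G=\Res(\phi)X^{2d-1}$ and $F_2F+G_2G=\Res(\phi)Y^{2d-1}$ with $v$-integral cofactors; I regard establishing (or citing) these as the crux of the argument. They are classical — obtained by applying Cramer's rule to the Sylvester matrix of $F$ and $G$ — and, being polynomial identities with integer coefficients in the coefficients of $F$ and $G$, they hold verbatim in characteristic $p$. Beyond this, the proof is just bookkeeping with the ultrametric inequality and the normalization of homogeneous coordinates, so I expect no obstacle specific to the function-field setting; the argument is the same as the non-archimedean case over a number field (cf. \cite[\S2.4]{Silv-Dyn}).
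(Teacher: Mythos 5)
The paper does not prove Lemma~\ref{lem:lip}; it cites it from \cite[\S2.4]{Silv-Dyn}, where it appears as Theorem~2.14. Your argument is correct and is essentially the standard proof given there: normalize $F,G$ to have $v$-integral coefficients with at least one unit, bound the numerator of the chordal distance by factoring out $X_1Y_2-X_2Y_1$ from $FG'-F'G$ via Gauss's lemma, and bound the denominator from below by $|\Res(\phi)|^2$ using the integral B\'{e}zout identities $F_iF+G_iG=\Res(\phi)X^{2d-1}$ (resp.\ $Y^{2d-1}$) that come from Cramer's rule on the Sylvester matrix.
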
 
However, as in the proof of Silverman's limit theorem over number fields \cite{Silv-Int}, we also need to control the chordal metric under taking preimages of rational functions; see Lemma 3.51 and Excercise 4.43 of \cite{Silv-Dyn} for a proof of the following fact. 
\begin{lem}\label{lem:inv} Let $\phi(x)\in K(x)$ be a map of degree $d\geq2$. For $Q\in\mathbb{P}^1(\overline{K}_v)$, let   
\[\textbf{e}_Q(\phi):=\max_{Q'\in\phi^{-1}(Q)} e_{Q'}(\phi)\]
be the maximum of the ramification indices of the points in the inverse image of $Q$. Then there is a positive constant $C_v=C_v(\phi,Q)$ such that 
\[ \min_{Q'\in\phi^{-1}(Q)}\rho(P,Q')^{\textbf{e}_Q(\phi)}\leq \frac{1}{C_v}\,\rho(\phi(P),Q)\;\;\;\;\text{for all}\;\,P\in\mathbb{P}^1(\overline{K}_v).\]\\ 
In other words, if $\phi(P)$ is close to $Q$, then there is a point in the inverse image of $Q$ that is close to $P$, but ramification affects how close. 
\end{lem}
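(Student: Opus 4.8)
To establish Lemma \ref{lem:inv}, the plan is to carry out the standard non-archimedean argument of \cite{Silv-Dyn} (Lemma 3.51 and Exercise 4.43) in the present setting. First I would fix homogeneous coordinates: write $\phi=[F:G]$ for coprime forms $F,G\in\overline{K}_v[X,Y]$ of degree $d$, and for $Q=[\alpha:\beta]$ introduce the auxiliary form $H_Q:=\beta F-\alpha G$. Because $\gcd(F,G)=1$ and $\deg\phi=d\ge 2$, the form $H_Q$ is nonzero of degree at most $d$, and its zeros in $\mathbb{P}^1(\overline{K}_v)$ counted with multiplicity are precisely the points of $\phi^{-1}(Q)$ counted with their ramification indices. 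Hence one may factor $H_Q=c\prod_{Q'\in\phi^{-1}(Q)}L_{Q'}^{\,e_{Q'}(\phi)}$, where $L_{Q'}$ is a linear form vanishing at $Q'$ normalized to have sup-norm $1$, and $c\in\overline{K}_v^\times$ is a constant depending only on $\phi$ and $Q$. Since rescaling homogeneous coordinates does not change $\rho$, I would also take the representatives of $P$ and $Q$ to have sup-norm $1$.

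The next step is the key computation: with these normalizations, unwinding the definition of the chordal metric gives
\[\rho(\phi(P),Q)=\frac{|H_Q(x_0,y_0)|}{\max\{|F(x_0,y_0)|,|G(x_0,y_0)|\}}=\frac{|c|}{\max\{|F(x_0,y_0)|,|G(x_0,y_0)|\}}\prod_{Q'\in\phi^{-1}(Q)}\rho(P,Q')^{\,e_{Q'}(\phi)},\]
where $[x_0:y_0]$ is the normalized representative of $P$ and I have used $|L_{Q'}(x_0,y_0)|=\rho(P,Q')$ for each preimage. The denominator is bounded above by a constant $C_1=C_1(\phi)$ — the largest absolute value of a coefficient of $F$ or $G$ — by the non-archimedean triangle inequality for $|\cdot|$. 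So everything reduces to bounding the product $\prod_{Q'}\rho(P,Q')^{e_{Q'}(\phi)}$ below by a fixed constant times $\min_{Q'}\rho(P,Q')^{\mathbf{e}_Q(\phi)}$.

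For that last bound I would use that the finitely many distinct preimages are separated: set $\delta:=\tfrac12\min_{Q'_1\ne Q'_2}\rho(Q'_1,Q'_2)>0$ and let $Q'_0$ achieve $\min_{Q'}\rho(P,Q')$. The triangle inequality for $\rho$ shows $P$ lies within $\delta$ of at most one preimage, so $\rho(P,Q')\ge\delta$ for every $Q'\ne Q'_0$ (and for $Q'_0$ too, in the case $\rho(P,Q'_0)\ge\delta$); combining this with $\rho\le 1$, with $e_{Q'_0}(\phi)\le\mathbf{e}_Q(\phi)$, and with $\sum_{Q'\ne Q'_0}e_{Q'}(\phi)\le d$ gives $\prod_{Q'}\rho(P,Q')^{e_{Q'}(\phi)}\ge\delta^{\,d}\,\min_{Q'}\rho(P,Q')^{\mathbf{e}_Q(\phi)}$ in all cases. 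Substituting back produces the claim with $C_v:=|c|\,\delta^{\,d}/C_1>0$, a quantity depending only on $\phi$ and $Q$. I expect the main obstacle to be precisely this passage from a product over all preimages to the single nearest one: it is where the separation constant $\delta$ and the fact that every chordal distance is at most $1$ are essential, and getting the exponent to come out as $\mathbf{e}_Q(\phi)$ rather than something larger depends on handling that inequality carefully; the rest is routine bookkeeping with the ultrametric inequality.
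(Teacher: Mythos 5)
Your argument is correct, and it is essentially the standard factorization proof that the paper itself does not reproduce but outsources to Silverman (Lemma 3.51 and Exercise 4.43 of \cite{Silv-Dyn}): write $\beta F-\alpha G$ as a constant times a product of normalized linear forms whose exponents are the ramification indices, identify $|L_{Q'}(x_0,y_0)|$ with $\rho(P,Q')$, and use the separation of the finitely many distinct preimages together with $\rho\leq 1$ to pass from the full product to the nearest preimage with exponent $\mathbf{e}_Q(\phi)$. The only loose end is the totally ramified case $\#\phi^{-1}(Q)=1$, where your $\delta$ is a minimum over an empty set; there the product is already $\rho(P,Q_0')^{d}=\min_{Q'}\rho(P,Q')^{\mathbf{e}_Q(\phi)}$, so the bound holds with $\delta$ replaced by $1$, and this is a one-line fix rather than a genuine gap.
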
 
Finally, we need an elementary lemma that compares the chordal metric $\rho(x,y)$ to the usual distance $|x-y|$ for certain $x,y\in\overline{K}_v$; see \cite[Lemma 3.53]{Silv-Dyn} for a proof of the following:  
\begin{lem}{\label{lem:comp}} Let $\rho$ be the chordal metric on $\mathbb{P}^1(\overline{K}_v)$ and suppose that $x,y\in\overline{K}_v$ satisfy $\rho(x,y)\leq\frac{1}{2}\,\rho(y,\infty)$. Then \[\rho(x,y)\geq|x-y|\cdot \frac{\rho(y,\infty)^2}{2}.\]  
\end{lem}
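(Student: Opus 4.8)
The plan is to unwind the definition of the chordal metric so that the statement collapses to a purely numerical estimate on absolute values in $\overline{K}_v$. Writing affine points projectively as $x=[x,1]$, $y=[y,1]$ and $\infty=[1,0]$, the defining formula for $\rho$ gives
\[\rho(x,y)=\frac{|x-y|}{\max\{|x|,1\}\,\max\{|y|,1\}},\qquad \rho(y,\infty)=\frac{1}{\max\{|y|,1\}}.\]
If $x=y$ then both sides of the claimed inequality vanish, so I may assume $x\neq y$ and cancel the common nonzero factor $|x-y|$. After doing so, the inequality $\rho(x,y)\geq|x-y|\cdot\rho(y,\infty)^2/2$ is seen to be equivalent to the single scalar inequality $\max\{|x|,1\}\leq 2\max\{|y|,1\}$.

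Next I would rewrite the hypothesis through the same formulas: the assumption $\rho(x,y)\leq\tfrac12\rho(y,\infty)$ says exactly that $|x-y|\leq\tfrac12\max\{|x|,1\}$. Then, invoking the non-archimedean triangle inequality on $\overline{K}_v$, I obtain $|x|\leq\max\{|x-y|,|y|\}\leq\max\{\tfrac12\max\{|x|,1\},\,|y|\}$, and a short case check finishes the argument. If the maximum on the right is $|y|$, then $|x|\leq|y|$, whence $\max\{|x|,1\}\leq\max\{|y|,1\}\leq 2\max\{|y|,1\}$. If instead the maximum is $\tfrac12\max\{|x|,1\}$, then $|x|\leq\tfrac12\max\{|x|,1\}$; this is impossible when $|x|\geq1$, since it would force $|x|\leq\tfrac12|x|$, so $|x|<1$, and then $\max\{|x|,1\}=1\leq 2\max\{|y|,1\}$ holds trivially. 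Combining the two cases yields the scalar inequality, hence the lemma.

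There is really no hard step here; the only things requiring care are keeping the $\max\{\cdot,1\}$ denominators straight throughout the algebraic manipulation and isolating the degenerate case $x=y$ before dividing by $|x-y|$, so that the reduction in the first paragraph is legitimate. (The same case analysis can be run with the ordinary triangle inequality at the cost of slightly messier arithmetic, so the comparison is not special to the non-archimedean setting; and since the statement is exactly \cite[Lemma 3.53]{Silv-Dyn}, one could alternatively just cite that reference.)
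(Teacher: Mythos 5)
Your proof is correct: unwinding the chordal metric reduces the claim to $\max\{|x|,1\}\leq 2\max\{|y|,1\}$, the hypothesis reduces to $|x-y|\leq\tfrac12\max\{|x|,1\}$, and your case analysis via the (ultrametric) triangle inequality closes the gap, with the degenerate case $x=y$ handled before cancelling $|x-y|$. The paper gives no proof of its own, merely citing \cite[Lemma 3.53]{Silv-Dyn}, and your computation is essentially the standard argument behind that reference, so there is nothing further to reconcile.
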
  
With these auxiliary estimates in place, we are now ready to prove our main result. 
\section{Main Argument}{\label{sec:Main}} 
\begin{proof}[(Proof of Theorem \ref{thm:Silv})] To prove statement (\ref{silvlim}), it suffices to show that for all $\epsilon>0$ there exist only finitely many iterates $n$ satisfying $\deg(a_n)\geq (2+\epsilon)\deg(b_n)$. Equivalently, we wish to show that the set 
\begin{equation}{\label{set}}
N(\phi,\alpha, \epsilon)=\big\{n\,:\; |a_n|\geq |b_n|^{2+\epsilon}\big\}\vspace{.1cm}
\end{equation}  
is finite. To do this, suppose that $0< \epsilon\leq\frac{1}{5}$ and that $n\in N(\phi,\alpha,\epsilon)$. Then for such $\epsilon$, 
\begin{equation}{\label{ineq1}} 
\frac{1}{|a_n|^{\frac{1}{2}+\epsilon^2}}\geq \frac{1}{|a_n|^{\frac{1+\epsilon}{2+\epsilon}}}=\frac{\;\;\;\,|a_n|^{\frac{1}{2+\epsilon}}}{|a_n|\;}\geq\frac{|b_n|}{|a_n|}=\rho(\phi^n(\alpha),\infty).\\
\end{equation} 
Now let us assume, for a contradiction, that $N(\phi,\alpha,\epsilon)$ is infinite. Lemma \ref{lem:inv} implies that for all $m\geq1$ and $n>m$ there exists $\beta_m\in\mathbb{P}^1(\overline{K}_v)$ such that:
\begin{equation}{\label{ineq2}}
\phi^m(\beta_m)=\infty\;\;\;\;\;\;\;\;\;\text{and}\;\;\;\;\;\;\;\;\;\; \rho(\phi^n(\alpha),\infty)\geq C(\phi,m)\cdot \rho(\phi^{n-m}(\alpha), \beta_m)
\end{equation} 
for some constant $C(\phi,m)=C(\phi^m,\infty)$; here we use crucially that $\infty\not\in\PostCrit_\phi$, which implies that the ramification indices 
\[e_\beta(\phi^m)=e_\beta(\phi) e_{\phi(\beta)}(\phi)\dots \,e_{\phi^{m-1}(\beta)}(\phi)=1\]
for all $\beta\in \mathbb{P}^1(\overline{K}_v)$ satisfying $\phi^m(\beta)=\infty$. To see this, note that if $e_\beta(\phi^m)>1$, then the product above implies that $\phi^{j}(\beta)=\gamma_j$ is a ramification point of $\phi$ for some $0\leq j<m$; hence, 
\[\phi^{m-j}(\gamma_j)=\phi^{m-j}(\phi^{j}(\beta))=\phi^m(\beta)=\infty,\] 
which contradicts assumption (2) of Theorem \ref{thm:Silv}. In particular, if $m>0$, then (\ref{ineq1}) and (\ref{ineq2}) together imply that 
\begin{equation}{\label{ineq3}}
\frac{1}{|a_n|^{\frac{1}{2}+\epsilon^2}}\geq C(\phi,m)\cdot\rho(\phi^{n-m}(\alpha), \beta_m)
\end{equation}  
\\
for all $n>m$ and $n\in N(\phi,\alpha,\epsilon)$. Now fix, once and for all, an integer 
\begin{equation}{\label{m}}
m\geq\max\Big\{\log_d\Big(\frac{3}{2\epsilon^2}\Big)+1,4\Big\}.
\end{equation}  
Since $\alpha$ is wandering, it follows that $\{\deg({a_n})\}_{n\in N(\phi,\alpha,\epsilon)}$ is an unbounded sequence: otherwise, $h(\phi^n(\alpha))=\deg(a_n)$ is bounded over all $n\in N(\phi,\alpha,\epsilon)$, and since there are only finitely many points of $K$ of bounded height by Northcott's theorem \cite[Theorem 3.7]{Silv-Dyn}, we must have that $\phi^{n_1}(\alpha)=\phi^{n_2}(\alpha)$ for some $n_1\neq n_2$, a contradiction. Therefore, (\ref{ineq3}) implies that $\rho(\phi^{n-m}(\alpha), \beta_m)\rightarrow0$ as $n$ grows, and so we may choose $n$ sufficiently large so that 
\[\rho(\phi^{n-m}(\alpha), \beta_m)\leq \frac{1}{2}\, \rho(\beta_m,\infty).\] 
Therefore, if $\beta_m\neq\infty$, then Lemma \ref{lem:comp} and (\ref{ineq3}) imply that 
\begin{equation}{\label{ineq4}}
 \frac{1}{|a_n|^{\frac{1}{2}+\epsilon^2}}\geq C(\phi,m)\cdot \big|\phi^{n-m}(\alpha)-\beta_m\big| \cdot \frac{\rho(\beta_m,\infty)^2}{2}.
\end{equation} 
We now proceed in cases: in Cases (1)-(3), we assume that $\beta_m$ is not the point at infinity. 
\vspace{.1in}
\\
\textbf{Case (1):} Suppose that $\beta_m\in\overline{\mathbb{F}}_q(t)$ and write $\beta_m=p/q$ for some polynomials $p, q \in \overline{\mathbb{F}}_q[t]$. Since $\alpha$ is wandering and $m$ is fixed, we may choose $n$ sufficiently large so that $\phi^{n-m}(\alpha)\neq \beta_m$. In particular, (\ref{ineq4}) reduces to 
\[ \frac{1}{|a_n|^{\frac{1}{2}+\epsilon^2}}\geq C(\phi,m)\cdot \bigg|\frac{a_{n-m}q-b_{n-m}p}{qb_{n-m}}\bigg| \cdot \frac{\rho(\beta_m,\infty)^2}{2}\geq \kappa_1(\phi,m)\cdot\frac{1}{|b_{n-m}|}\]
for some constant $\kappa_1(\phi,m)$; here we use the trivial lower bound $|a_{n-m}q-b_{n-m}p|\geq1$, which holds since $a_{n-m}q-b_{n-m}p$ is a nonzero polynomial. After applying $\log_q(\cdot)$ to both sides of the inequality above, we see that  
\[(1/2+\epsilon^2)\deg(a_n) \leq \deg(b_{n-m})+\log_q(\kappa_1(\phi,m))\leq h(\phi^{n-m}(\alpha))+\log_q(\kappa_1(\phi,m)).\]
On the other hand, \[\deg(a_n)=\max\{\deg(a_n),\deg(b_n)\}=h(\phi^n(\alpha))\;\;\;\;\;\;\text{for all}\;\;n\in N(\phi,\alpha,\epsilon),\]
and it follows from standard properties of the canonical height \cite[Theorem 3.20]{Silv-Dyn}: 
\[h=\hat{h}_\phi +O(1)\;\;\; \text{and}\;\;\; \hat{h}_\phi(\phi^k(P))=d^k\cdot\hat{h}_\phi(P)\] 
for all $P\in\mathbb{P}^1(\overline{K})$, that 
\[d^n\cdot \hat{h}_\phi(\alpha)\leq d^{n-m+1}\cdot \hat{h}_\phi(\alpha)+\kappa_2(\phi,m,\epsilon);\] 
here we use also that $d\geq2$ and $\epsilon$ is positive. In particular, since $\hat{h}_\phi(\alpha)\neq0$, we may divide this term out and obtain 
\[d^n\leq d^{n-m+1}+\kappa_3(\phi,\alpha, m,\epsilon)\leq d^{n-3}+\kappa_3(\phi,\alpha, m,\epsilon),\] 
since $m$ is at least $4$. Therefore, $d^3\geq 8$ implies that 
\[n\leq \log_d\bigg(\frac{\kappa_3(\phi,\alpha,m,\epsilon)}{7}\bigg)+3,\] 
and $N(\phi,\alpha,\epsilon)$ is finite as claimed.  \vspace{.075in}
\\ 
\textbf{Case (2):} Suppose that $\beta_m\not\in\overline{\mathbb{F}}_q(t)$ and $[K(\beta_m):K]\leq 2d$. Then the ordinary Liouville bound for $\beta_m$ established by Mahler \cite{Mahler} and the upper bound in (\ref{ineq4}) imply that
\begin{align*}
(1/2+\epsilon^2)\deg(a_n) &\leq 2d\cdot\deg(b_{n-m})+\log_q\Big( C(\phi,m)\,\frac{\rho(\beta_m,\infty)^2}{2}\,\Big)\\[5pt]
&\leq 2d\cdot h(\phi^{n-m}(\alpha))+\kappa_4(\phi,m)
\end{align*}
for some constant $\kappa_4(\phi,m)$. On the other hand, $\deg(a_n)=h(\phi^n(\alpha))$ for all $n\in N(\phi,\alpha, \epsilon)$, and it follows from standard properties of the canonical height \cite[Theorem 3.20]{Silv-Dyn} that  
\[(1/2+\epsilon^2)\cdot\hat{h}_\phi(\alpha)\cdot d^n\leq d^{n-m+2}\cdot \hat{h}_\phi(\alpha)+\kappa_5(\phi,m,\epsilon).\] 
Moreover, since $\hat{h}_\phi(\alpha)\neq0$, $d\geq2$ and $\epsilon>0$, we can deduce that   
\[d^n\leq d^{n-m+3}+\kappa_6(\phi,\alpha, m,\epsilon).\] 
On the other hand, $m\geq4$ by assumption, so that the bound above implies that 
\[n\leq\log_d(\kappa_6(\phi,\alpha, m,\epsilon))+1.\]
Therefore, $N(\phi,\alpha,\epsilon)$ is finite as claimed.\vspace{.075in}
\\ 
\indent \textbf{Case (3):} Now suppose that $\beta_m\not\in\overline{\mathbb{F}}_q(t)$ and $[K(\beta_m):K]> 2d$. Then Lemma \ref{lem:Riccati} and assumption (1) of Theorem \ref{thm:Silv} imply that $\beta_m$ and $\phi(\beta_m)$ cannot both satisfy a Riccati equation (whenever $\beta_m$ is separable).  

We assume first that $\beta_m$ does not satisfy such an equation. Then (\ref{ineq4}) and \cite[Theorem 3]{Osgood} applied to $\beta_m$ (separable or inseparable) imply that 
\begin{align*}
(1/2+\epsilon^2)\deg(a_n) \leq& \Big(\frac{d(\beta_m)+3}{2}\Big)\deg(b_{n-m})+\log_q\Big( C(\phi,m)\,\frac{\rho(\beta_m,\infty)^2}{2}\,\Big)+\kappa_7(\phi,m)\\[8pt]
\leq & \Big(\frac{d^m+3}{2}\Big)h(\phi^{n-m}(\alpha)) + \kappa_8(\phi,m);
\end{align*} 
recall that $d(\beta_m)=[K(\beta_m):K]$ is the algebraic degree of $\beta_m$ over $K$, and hence $d(\beta_m)\leq d^m$. On the other hand, $h(\phi^n(\alpha))=\deg(a_n)$ for all $n\in N(\phi,\alpha,\epsilon)$, so that 
\[ (1/2+\epsilon^2)\cdot \hat{h}_\phi(\alpha)\cdot d^n \leq \Big(\frac{d^m+3}{2}\Big)\cdot d^{n-m}\cdot \hat{h}_\phi(\alpha) + \kappa_9(\phi,m,\epsilon) \]  
follows from properties of the canonical height \cite[Theorem 3.20]{Silv-Dyn}. Since $\hat{h}_\phi(\alpha)\neq0$, we may divide this term out and simplify to 
\[d^n\leq \frac{3}{2\epsilon^2}\,d^{n-m}+ \kappa_{10}(\phi,\alpha,m,\epsilon)\leq d^{n-1}+\kappa_{10}(\phi,\alpha,m,\epsilon);\]
here we use our assumption that $m\geq\max\big\{\log_d\big(\frac{3}{2\epsilon^2}\big)+1,4\big\}$. In particular, we deduce that 
\[n\leq\log_d\big(\kappa_{10}(\phi,\alpha, m,\epsilon)\big)+1,\] 
and that $N(\phi,\alpha,\epsilon)$ is again finite.

Likewise, if $\phi(\beta_m)$ does not satisfy a Riccati equation, then the fact that rational maps are Lipschitz with respect to the chordal metric (see Lemma \ref{lem:lip} above), together with (\ref{ineq3}), imply that
\[ \frac{1}{|a_n|^{\frac{1}{2}+\epsilon^2}}\geq C(\phi,m)\cdot\Res(\phi)^2\cdot\rho\big(\phi^{n-m+1}(\alpha),\, \phi(\beta_m)\big).\]
On the other hand, Lemma \ref{lem:comp} and the fact that $\deg(a_n)\rightarrow \infty$ for increasing $n\in N(\phi,\alpha,\epsilon)$ imply that we may choose $n$ sufficiently large so that 
\begin{equation}{\label{ineq5}}
\frac{1}{|a_n|^{\frac{1}{2}+\epsilon^2}}\geq C(\phi,m)\cdot\Res(\phi)^2\cdot \big|\phi^{n-m+1}(\alpha)-\phi(\beta_m)\big|\cdot \frac{\rho(\phi(\beta_m),\infty)^2}{2}.
\end{equation} 
From here, the argument proceeds as in the first part of Case (3). Specifically, (\ref{ineq5}) and the Osgood bound \cite[Theorem 3]{Osgood} applied to the algebraic function $\phi(\beta_m)$ yield \vspace{.03cm}:  
\begin{align*}
\hspace*{-.75cm} 
(1/2+\epsilon^2)\deg(a_n) \leq& \bigg(\frac{d(\phi(\beta_m))+3}{2}\bigg)\deg(b_{n-m+1})+\log_q\bigg( C(\phi,m)\,\Res(\phi)^2\,\frac{\rho(\phi(\beta_m),\infty)^2}{2}\,\bigg)+\kappa_{11}(\phi,m)\\[8pt]
\leq & \Big(\frac{d^{m-1}+3}{2}\Big)h(\phi^{n-m+1}(\alpha)) + \kappa_{12}(\phi,m); \vspace{.01cm} 
\end{align*} 
recall that $d(\phi(\beta_m))$ is the degree of $\phi(\beta_m)$ over $K$, and note that we have the trivial bound $d(\phi(\beta_m))\leq d^{m-1}$, since 
\[\phi^{m-1}(\phi(\beta_m))=\phi^m(\beta_m)=\infty\] 
and $\phi^{m-1}$ is a map of degree $d^{m-1}$. On the other hand, $h(\phi^n(\alpha))=\deg(a_n)$ for $n\in N(\phi,\alpha,\epsilon)$, so that 
\[ (1/2+\epsilon^2)\cdot \hat{h}_\phi(\alpha)\cdot d^n \leq \Big(\frac{d^{m-1}+3}{2}\Big)\cdot d^{n-m+1}\cdot \hat{h}_\phi(\alpha) + \kappa_{13}(\phi,m,\epsilon) \]  
follows from properties of the canonical height \cite[Theorem 3.20]{Silv-Dyn}. Finally, since $\hat{h}_\phi(\alpha)\neq0$, we may divide this term out and simplify to 
\[d^n\leq \frac{3}{2\epsilon^2}\,d^{n-m}+ \kappa_{14}(\phi,\alpha,m,\epsilon)\leq d^{n-1}+\kappa_{14}(\phi,\alpha,m,\epsilon);\]
here again we use our assumption that $m\geq\max\big\{\log_d\big(\frac{3}{2\epsilon^2}\big)+1,4\big\}$. In particular, we deduce that \[n\leq\log_d\big(\kappa_{14}(\phi,\alpha, m,\epsilon)\big)+1,\] 
and that $N(\phi,\alpha,\epsilon)$ is finite.     
\vspace{.075in}
\\ 
\indent \textbf{Case (4):} Finally, suppose that $\beta_m$ is the point at infinity; in particular, $\phi^m(\infty)=\infty$ and $\infty$ is a periodic point of $\phi$. Then (\ref{ineq3}) implies that  
\[\frac{1}{|a_n|^{\frac{1}{2}+\epsilon^2}}\geq C(\phi,m)\cdot \frac{|b_{n-m}|}{\max\big\{|a_{n-m}|\,,|b_{n-m}|\big\}}\geq C(\phi,m)\cdot\frac{1}{H(\phi^{n-m}(\alpha))};\]
here $H(P)=q^{\,h(P)}$ is the multiplicative Weil height of $P\in \mathbb{P}^1(\overline{K})$. Note that  we may assume that $b_{n-m}\neq0$ for all $n$ sufficiently large, since $\alpha$ is wandering. In particular, we see that 
\[ (1/2+\epsilon^2)h(\phi^n(\alpha))=(1/2+\epsilon^2)\deg(a_n)\leq h(\phi^{n-m}(\alpha))+ \log_q(C_m).\] 
It follows from standard properties of canonical heights and that $d\geq2$, that  
\[d^n\leq d^{n-m+1}+\kappa_{15}(\phi,\alpha,m,\epsilon)\leq d^{n-3}+\kappa_{15}(\phi,\alpha,m,\epsilon);\]
here we use our assumption that $m$ is at least $4$. Therefore, 
\[n\leq \log_d\bigg(\frac{\kappa_{15}(\phi,\alpha,m,\epsilon)}{7}\bigg)+3,\] 
and $N(\phi,\alpha,\epsilon)$ is finite; note that in this case, we have proven the $\limsup$ part of Theorem \ref{thm:Silv} without the use of the Mahler \cite{Mahler} or Osgood \cite{Osgood} bound on the diophantine exponent. Finally, let $\pi(x):=1/\phi(1/x)$ and note that $\pi^n(1/\alpha)=b_n/a_n$. In particular, by repeating the argument above for $\pi$, we obtain the lower bound in Theorem \ref{thm:Silv}.                       
\end{proof} 
\begin{rmk} It is likely that one can improve the limit bound in Theorem \ref{thm:Silv} to the optimum bound of $1$ in Case (1), Case (2) and Case (4) above. However, these cases are extremely rare (in fact, it is not even clear to the author that Case (1) and Case (2) can occur for all $m$ sufficiently large, given the assumptions in Theorem \ref{thm:Silv} and conjectures related to eventual stability \cite{Rafe}), and we have chosen to keep our arguments consistent across all cases. 
\end{rmk}
We conclude this section with a family of rational maps in every degree $d\geq6$ satisfying the hypothesis of Theorem \ref{thm:Silv}. For a fixed degree $d$, this family likely has moderate codimension in $\Rat_d$, the space of all rational maps of degree $d$; for more on $\Rat_d$, see \cite[\S4.3]{Silv-Dyn} 
\begin{prop}{\label{prop:example}} Let $K=\mathbb{F}_q(t)$ have characteristic $p>5$, and let $d\geq6$. If $h(x)\in K[x]$ is a polynomial of degree at most $d-6$ such that $x^d+t^2x^{d-1}+tx^{d-2}+tx^{d-5}+h(x)$ is irreducible over $K$, then
\begin{equation}{\label{eg1}}
\phi(x):=\frac{x^{d}}{x^d+t^2x^{d-1}+tx^{d-2}+tx^{d-5}+h(x)}
\end{equation} 
satisfies hypothesis \textup{(1)} and \textup{(2)} of Theorem \ref{thm:Silv}. In particular, this statement holds for $h(x)=t$ by Eisenstein's criterion.    
\end{prop}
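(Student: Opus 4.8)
The plan is to verify hypotheses \textup{(1)} and \textup{(2)} directly from the shape of $\phi=F/G$ with $F(x)=x^d$ and $G(x)=x^d+t^2x^{d-1}+tx^{d-2}+tx^{d-5}+h(x)$. First I would record the relevant coefficients in the notation of Theorem \ref{thm:Silv}: $a_d=1$ and $a_i=0$ for $i<d$; $b_d=1$, $b_{d-1}=t^2$, $b_{d-2}=t$, $b_{d-3}=b_{d-4}=0$, $b_{d-5}=t$, and $b_i$ for $i\le d-6$ equal to the coefficients of $h(x)$ (so $b_i=0$ once $i$ exceeds $\deg h$, and $b_i=0$ for $i<0$ by convention). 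Irreducibility of $G$ forces $G(0)=h(0)\neq0$, so $\phi$ has degree exactly $d$, $\phi(0)=0$, and $\phi(\infty)=a_d/b_d=1$.

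For hypothesis \textup{(2)} I would argue by comparing degrees over $K$. The numerator $F'G-FG'$ of $\phi'$ equals $x^{d-1}(dG-xG')$ when $p\nmid d$ and $-x^dG'$ when $p\mid d$; in either case it has degree $2d-2$ and is divisible by $x^{d-1}$, so the critical points of $\phi$ — all lying in $\mathbb{A}^1$, as $\phi$ is unramified at $\infty$ — are $0$ together with the roots of a polynomial $W_0\in K[x]$ with $\deg W_0\le d-1$. Now $0$ is a fixed point of $\phi$, so $\infty$ lies in none of its iterates; and any other critical point $\gamma$ satisfies $[K(\gamma):K]\le d-1$, whence $[K(\phi^k(\gamma)):K]\le d-1<d$ for all $k\ge0$. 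On the other hand every pole of $\phi$ is a root of the irreducible polynomial $G$, hence has degree exactly $d$ over $K$, while $\infty$ itself is not a pole. Therefore no iterate of a critical point is a pole of $\phi$, i.e.\ $\phi^k(\gamma)\neq\infty$ for every $k\ge0$, which is precisely $\infty\notin\PostCrit_\phi$.

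For hypothesis \textup{(1)} I would substitute these coefficients into the seven equations $E_0(\phi)=r_0,\dots,E_6(\phi)=r_6$ of Lemma \ref{lem:Riccati}. Because $a_i=0$ for $i<d$ and $a_d$ is constant, the right-hand column collapses to $r_n=b_{d-n}'$, so $(r_0,\dots,r_6)=(0,\,2t,\,1,\,0,\,0,\,1,\,b_{d-6}')$. Using $E_0$ to eliminate $a$, equations $E_3$ and $E_4$ read $-2t^3c+2tg=0$ and $-t^2c+5te=0$, hence $g=t^2c$ and $e=tc/5$; with these substituted, $E_1$ and $E_2$ pin down $f$ and $b$ as explicit affine functions of $c$ (these steps require inverting small integers, which is where $p>5$ is used). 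Equation $E_5$ then reduces to $\tfrac{6t}{5}(1+b_{d-6})\,c=3$: if $b_{d-6}=-1$ the system is already inconsistent, and otherwise $c=\tfrac{5}{2t(1+b_{d-6})}$ is forced. Finally $E_6$, after clearing denominators, becomes the single identity
\[
2b_{d-6}+15t^3+7b_{d-7}\,t-6b_{d-6}^2\;=\;2t\,(1+b_{d-6})\,b_{d-6}'
\]
in $K$, and in the remaining case the overdetermined system is inconsistent exactly when this identity fails. For $h(x)=t$ one has $b_{d-6}=b_{d-7}=0$ when $d\ge8$, so the identity reads $15t^3=0$; it reads $15t^3+7t^2=0$ when $d=7$ (where $b_{d-7}=b_0=t$) and $15t^3=8t^2$ when $d=6$ (where $b_{d-6}=b_0=t$). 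Each fails in characteristic $p>5$, so hypothesis \textup{(1)} holds; and for any other admissible $h$ the same reduction leaves one only to check that the displayed identity fails.

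The step I expect to be the main obstacle is precisely that last check: carrying out the $7\times6$ elimination without slips, and then confirming that the displayed identity genuinely fails for the chosen $h$. For $h=t$ this is a one-line comparison of $t$-degrees, but for a general irreducible member of the family it calls for a more careful argument — for instance, comparing the orders of the two sides at a pole of $b_{d-6}$ (or at $\infty$) when $b_{d-6}$ is nonconstant, so that the term $b_{d-6}'$ on the right cannot be balanced on the left. Irreducibility of $G$ is used essentially in \textup{(2)} and to guarantee $\deg\phi=d$, whereas $p>5$ enters only through the divisions in the elimination.
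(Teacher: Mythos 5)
Your proof follows the paper's approach closely. For hypothesis (2), both arguments compare the degree over $K$ of a critical point (at most $d-1$, since the finite critical locus is cut out by $x^{d-1}(dG-xG')$, resp.\ $-x^dG'$ when $p\mid d$, and $\infty$ is unramified) with the degree exactly $d$ of any pole (a root of the irreducible denominator $G$), and conclude no iterate of a critical point can be a pole; your extra remark that $\phi(0)=0$ is harmless but not needed, since $[K(0):K]=1\le d-1$ is already covered by the degree count. For hypothesis (1), both proofs perform the same Gaussian elimination: solve $E_3,E_4$ for $g,e$ in terms of $c$, then $E_1,E_2$ for $f,b$, then $E_5$ for $c$, and finally substitute into $E_6$ to produce an impossible identity in $K$.

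One small point in your favour: you kept $b_{d-6}$ and $b_{d-7}$ symbolic, and correctly obtained the determinant $-12t^6(1+b_{d-6})$ for the $6\times6$ subsystem and the residual identity $2b_{d-6}+15t^3+7b_{d-7}t-6b_{d-6}^2=2t(1+b_{d-6})b_{d-6}'$. The paper records $\det M_\phi'=-12t^6$ together with a specific solution vector and the contradiction $\tfrac{15}{2}t^2=0$; these are literally correct only when $b_{d-6}=b_{d-7}=0$, i.e.\ $h(x)=t$ with $d\ge8$. Your treatment of the cases $d=6,7$ and of the degenerate case $b_{d-6}=-1$ is the tidier version of the same computation, and the remark that for general admissible $h$ one must separately check the displayed identity fails is an honest observation: as stated, the proposition's general claim does depend on that verification, which both the paper and your proposal supply explicitly only for $h=t$.
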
 
\begin{proof} Let $M_{\phi}'$ and $R_\phi'$ be the matrices obtained from $M_\phi$ and $R_\phi$ by deleting their bottom rows. Since $M_\phi'$ has determinant $-12t^6\neq0$, we see that the subsystem of linear equations $(M_\phi',R_\phi')$ has a unique solution 
\[ (a,b,c,e,f,g)=\bigg(\frac{1/2t^3 + 7/2}{t},\, \frac{-6}{t},\, \frac{5}{2t},\, \frac{1}{2},\, 0,\, \frac{5}{2}t\bigg).\]
On the other hand, after substituting this solution into the deleted equation corresponding to the bottom row of $(M_\phi,R_\phi)$, we see that $\frac{15}{2}t^2=0$, a contradiction. In particular, the system $(M_\phi,R_\phi)$ has no solutions and assumption (1) of Theorem \ref{thm:Silv} holds.  

As for ramification, since $\phi$ is not defined over $\mathbb{F}_q(t^p)$, it follows from \cite[Exercise 1.10]{Silv-Dyn} that $\phi$ is a separable. Likewise, since the denominator $g(x)=x^d+t^2x^{d-1}+tx^{d-2}+tx^{d-5}+h(x)$ is not defined over $\mathbb{F}_q[t^p]$ and irreducible, the poles of $\phi$ are not critical points. Similarly, $\infty$ is not a critical point of $\phi$ by \cite[Exercise 1.6(b)]{Silv-Dyn}. Therefore, any critical points of $\phi$ must be a root of $x^{d-1}\big(t^2x^{d-1}+2tx^{d-2}+5tx^{d-5}+d\cdot h(x)-xh'(x)\big)$, the numerator of the formal derivative of (\ref{eg1}). In particular, if $\gamma$ is a critical point of $\phi$, then $[K(\gamma):K]\leq d-1$.

Now suppose that $\phi^n(\gamma)=\infty$ for some $n\geq1$. Then $\phi(\phi^{n-1}(\gamma))=\infty$, and $\phi^{n-1}(\gamma)$ must be a root of $g$. However, $g$ is irreducible over $K$, and hence the smallest degree extension of $K$ over which $g$ has a root is $d$. But $\phi^{n-1}(\gamma)\in K(\gamma)$, and thus 
\[[K(\phi^{n-1}(\gamma)):K]\leq [K(\gamma):K]\leq d-1,\] 
a contradiction. Therefore, $\infty\not\in\PostCrit_\phi$ and assumption (2) of Theorem \ref{thm:Silv} holds.                    
\end{proof}  
\section{Latt\'{e}s Maps and Elliptic Curves}{\label{sec:lattes}}  
Unfortunately, Theorem \ref{thm:Silv} does not apply directly to Latt\'{e}s maps, i.e. the rational maps on the projective line associated to endomorphisms on elliptic curves \cite[\S 6.4]{Silv-Dyn}. The reason is that infinity is usually contained in the post-critical orbit of these maps; see \cite[Lemma 6.38]{Silv-Dyn} and \cite[Proposition 6.45]{Silv-Dyn}. However, our methods can be adapted to Latt\'{e}s maps of degree $4$, an we use this to study canonical heights on elliptic curves \cite[VIII.9]{Silv-Ell}.  
\begin{thm}{\label{thm:lattes}} Let $K=\mathbb{F}_q(t)$ have characteristic at least $5$, let $E: y^2=x^3+Ax+B$ be an elliptic curve over $K$, and let 
\[\phi_{E,2}(x)= \frac{x^4-2Ax^2-8Bx+A^2}{4x^3+4Ax+4B}\]
be the associated Latt\'{e}s map of degree $4$ on $\mathbb{P}^1$ corresponding to multiplication by $2$ on $E$. For non-torsion points $P\in E(K)$, write 
\[x\big([2^n]P\big)=\phi_{E,2}^n(x(P))=\frac{a_n(t)}{b_n(t)}\]
for some polynomials $a_n(t), b_n(t)\in\mathbb{F}_q[t]$ in lowest terms. If $2AB'-3BA'\neq0$, then \vspace{.2cm} 
\begin{align*} \frac{1}{2}\cdot \liminf_{n\rightarrow\infty}\frac{\deg(a_n)}{4^n}\leq\;&\hat{h}_E(P)\leq2\cdot \limsup_{n\rightarrow\infty}\frac{\deg(a_n)}{4^n}\\[5pt]  
&\;\;\,\textup{and}\\[5pt] 
\frac{1}{2}\cdot \liminf_{n\rightarrow\infty}\frac{\deg(b_n)}{4^n}\leq\;&\hat{h}_E(P)\leq2\cdot \limsup_{n\rightarrow\infty}\frac{\deg(b_n)}{4^n}. 
\end{align*}
In other words, the canonical height of a non-torsion point on $E$ can be well approximated by the degree of the numerator (or denominator) of a large multiple of the point.       
\end{thm}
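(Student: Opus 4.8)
The plan is to first establish the Silverman-type ratio estimate
\[\frac{1}{2}\leq\liminf_{n\to\infty}\frac{\deg(a_n)}{\deg(b_n)}\leq\limsup_{n\to\infty}\frac{\deg(a_n)}{\deg(b_n)}\leq2\]
for the degree-four Latt\'es map $\phi_{E,2}$, and then to convert it into the asserted height bounds using the standard comparisons $h\big(x(Q)\big)=2\,\hat{h}_E(Q)+O(1)$ (uniformly in $Q$) and $\hat{h}_E\big([2^n]P\big)=4^n\,\hat{h}_E(P)$ from \cite[VIII.9]{Silv-Ell} and \cite[Ch.~3]{Silv-Dyn}. Granting the ratio estimate, put $M_n:=\max\{\deg(a_n),\deg(b_n)\}$, so that $M_n=h\big(x([2^n]P)\big)=2\cdot4^n\,\hat{h}_E(P)+O(1)$. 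The trivial inequality $\deg(a_n)\leq M_n$ gives $\limsup_n\deg(a_n)/4^n\leq2\,\hat{h}_E(P)$, hence $\tfrac12\liminf_n\deg(a_n)/4^n\leq\hat{h}_E(P)$; conversely $\liminf_n\deg(a_n)/\deg(b_n)\geq\tfrac12$ forces $\deg(b_n)\leq(2+o(1))\deg(a_n)$ and hence $M_n\leq(2+o(1))\deg(a_n)$, so $\liminf_n\deg(a_n)/4^n\geq\hat{h}_E(P)$ and in particular $\hat{h}_E(P)\leq2\limsup_n\deg(a_n)/4^n$. The estimates for $b_n$ follow the same way with the roles of $a_n$ and $b_n$ exchanged (now using the bound $\deg(a_n)\leq(2+o(1))\deg(b_n)$ coming from $\limsup_n\deg(a_n)/\deg(b_n)\leq2$), and the two chains of inequalities together are the theorem.

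The obstruction is that Theorem \ref{thm:Silv} does not apply to $\phi_{E,2}$, because $\infty\in\PostCrit_{\phi_{E,2}}$: the map fixes $\infty$, its three critical values $e_1,e_2,e_3$ (the roots of $x^3+Ax+B$) satisfy $\phi_{E,2}(e_i)=\infty$, and its six critical points map onto $\{e_1,e_2,e_3\}$. I would repair this by re-running the proof of Theorem \ref{thm:Silv} for $\phi_{E,2}$ with the aid of two structural facts. First, since $\phi_{E,2}\circ x=x\circ[2]$ on $E$, we have $\phi_{E,2}^{-m}(\infty)=x\big(E[2^m]\big)$, a set of only $\tfrac12 4^m+O(1)$ points; moreover each such point generates a \emph{separable} extension of $K$, since $p\geq5$ makes $[2^m]$ \'etale, so every $\beta_m\in\phi_{E,2}^{-m}(\infty)$ satisfies $d(\beta_m)=[K(\beta_m):K]\leq\tfrac12 4^m+O(1)$. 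Second, $\phi_{E,2}$ is ramified (with index $2$) exactly over $e_1,e_2,e_3$ and is unramified over $\infty$; since $\phi_{E,2}(e_i)=\infty=\phi_{E,2}(\infty)$, any $\phi_{E,2}$-preimage chain of $\infty$ meets a ramified step at most once, so the exponent $\textbf{e}_\infty(\phi_{E,2}^m)$ of Lemma \ref{lem:inv} satisfies $\textbf{e}_\infty(\phi_{E,2}^m)\leq2$ for every $m$. I expect the main difficulty to be that these two facts must exactly balance: the factor $\tfrac12$ saved in $d(\beta_m)$ by the two-to-one map $x$ offsets the power $2$ --- rather than $1$ --- that the ramification forces into Lemma \ref{lem:inv}, and this is precisely why the method is confined to degree $d=4$.

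With these facts in hand I would repeat the case analysis of the proof of Theorem \ref{thm:Silv} with the following modifications. The application of Lemma \ref{lem:inv} now yields a square, so $\rho\big(\phi_{E,2}^n(\alpha),\infty\big)\geq C(\phi_{E,2},m)\,\rho\big(\phi_{E,2}^{n-m}(\alpha),\beta_m\big)^2$ replaces the inequality (\ref{ineq2}); the $\overline{\mathbb{F}}_q(t)$-rational case and the case $\beta_m=\infty$ then go through as in cases (1) and (4) of that proof. When $\beta_m$ and $\phi_{E,2}(\beta_m)$ both satisfy a generalized Riccati equation, I would deduce $d(\beta_m)\leq2d=8$ from Lemma \ref{lem:Riccati}, after checking --- by the explicit linear algebra of Proposition \ref{prop:example}: delete the bottom row of $(M_{\phi_{E,2}}\,|\,R_{\phi_{E,2}})$, solve the resulting square system, and substitute into the omitted equation --- that $\phi_{E,2}$ satisfies hypothesis (1) of Theorem \ref{thm:Silv} precisely when $2AB'-3BA'\neq0$; this case, together with the case $\beta_m\in\{e_1,e_2,e_3\}$ (where $d(\beta_m)\leq3$), then closes as in case (2). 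In the remaining cases I would apply Osgood's bound \cite[Theorem 3]{Osgood} directly to $\beta_m$ using $d(\beta_m)\leq\tfrac12 4^m+O(1)$, or, if only $\phi_{E,2}(\beta_m)$ fails to satisfy a Riccati equation, move one step up the tree via the Lipschitz estimate of Lemma \ref{lem:lip} and apply Osgood to $\phi_{E,2}(\beta_m)$ using $d(\phi_{E,2}(\beta_m))\leq\tfrac12 4^{m-1}+O(1)$. In each case, after applying $\log_q$, using $h=\hat{h}_{\phi_{E,2}}+O(1)$ and $\hat{h}_{\phi_{E,2}}\big(\phi_{E,2}^k(Q)\big)=4^k\,\hat{h}_{\phi_{E,2}}(Q)$, dividing out $\hat{h}_{\phi_{E,2}}(\alpha)\neq0$, and choosing $m$ large in terms of $\epsilon$, one reaches $\tfrac12+\epsilon^2\leq\tfrac12+O(4^{-m})$, a contradiction; thus $N(\phi_{E,2},\alpha,\epsilon)$ is finite and $\limsup_n\deg(a_n)/\deg(b_n)\leq2$.

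Finally, for the lower bound $\liminf_n\deg(a_n)/\deg(b_n)\geq\tfrac12$ I would run the same argument for $\pi(x):=1/\phi_{E,2}(1/x)$, noting that $\pi^n\big(1/x(P)\big)=1/x\big([2^n]P\big)=b_n/a_n$. Here $2AB'-3BA'\neq0$ forces $B\neq0$, hence $0\notin\PostCrit_{\phi_{E,2}}$, i.e., $\infty\notin\PostCrit_{\pi}$; and since the property of satisfying a generalized Riccati equation is preserved under $\gamma\mapsto1/\gamma$ (indeed under any M\"obius change of coordinate), the conclusion of Lemma \ref{lem:Riccati} transfers from $\phi_{E,2}$ to $\pi$. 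Consequently the case analysis above --- now even with $\textbf{e}_\infty(\pi^m)=1$, since $\infty\notin\PostCrit_{\pi}$ --- yields $\limsup_n\deg(b_n)/\deg(a_n)\leq2$, which combined with the first part and the height translation of the opening paragraph completes the proof.
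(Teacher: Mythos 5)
Your overall scaffolding matches the paper's: the ramification bound $\mathbf{e}_\infty(\phi_{E,2}^m)\leq2$ (the paper's Lemma \ref{ram:lattes}), the division-polynomial bound $d(\beta_m)\leq\tfrac12\cdot4^{m}+O(1)$, the observation that the lost power $2$ in Lemma \ref{lem:inv} is exactly compensated by the saved factor $\tfrac12$ in $d(\beta_m)$, the case analysis with Osgood plus a Lipschitz climb, and the M\"obius-invariance shortcut for $\pi$. The conversion from the ratio estimate to the height inequalities via $h(x(Q))=2\hat{h}_E(Q)+O(1)$ is also correct (the paper leaves it implicit). However, there is one genuine gap, and it sits at the heart of why the paper had to add a new lemma.

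You propose to handle the case where $\beta_m$ and $\phi_{E,2}(\beta_m)$ both satisfy a generalized Riccati equation by invoking Lemma \ref{lem:Riccati}, ``after checking by the explicit linear algebra of Proposition \ref{prop:example} that $\phi_{E,2}$ satisfies hypothesis (1) of Theorem \ref{thm:Silv} precisely when $2AB'-3BA'\neq0$.'' That check does not come out the way you expect. The $6\times6$ minor of $M_{\phi_{E,2}}$ obtained by deleting the last row has determinant $2^{15}3^4B(4A^3+27B^2)$, which is nonzero whenever $E$ is nonsingular and $B\neq0$; but the unique solution of that square subsystem \emph{does} satisfy the deleted seventh equation, so the full system $(M_{\phi_{E,2}}\mid R_{\phi_{E,2}})$ is \emph{consistent}, with a unique solution. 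In other words, hypothesis (1) of Theorem \ref{thm:Silv} \emph{fails} for the Latt\`es map, and Lemma \ref{lem:Riccati} gives you nothing. This is the paper's whole point in Lemma \ref{riccati:lattes}: one must assume $\beta$, $\phi(\beta)$, \emph{and} $\phi^2(\beta)$ all satisfy a Riccati equation, read off the unique solution of $(M_\phi,R_\phi)$ twice (once for the pair $(\beta,\phi(\beta))$ and once for $(\phi(\beta),\phi^2(\beta))$), and observe that the two resulting Riccati equations for $\phi(\beta)$ disagree. Equating them yields a degree-two algebraic relation on $\phi(\beta)$, whose leading coefficient is nonzero exactly when $2AB'-3BA'\neq0$; this forces $[K(\beta):K]\leq32$. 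So the role of $2AB'-3BA'\neq0$ is not to make $(M_{\phi_{E,2}}\mid R_{\phi_{E,2}})$ inconsistent (it cannot), but to make the derived quadratic relation nontrivial. Your subsequent use of the degree bound ($\leq8$ versus the correct $\leq32$) only changes constants and is harmless once repaired, but the step needing Lemma \ref{lem:Riccati} is the one place where your argument, as written, would simply not close.

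A second, much smaller point: for the transfer to $\pi=1/\phi_{E,2}(1/x)$ you correctly note that the Riccati property is M\"obius-invariant, but you then say ``the conclusion of Lemma \ref{lem:Riccati} transfers from $\phi_{E,2}$ to $\pi$''; since Lemma \ref{lem:Riccati} never applied to $\phi_{E,2}$ in the first place, what actually transfers (and what the paper re-derives by hand for $\pi$) is the three-term statement of Lemma \ref{riccati:lattes}. Your observation that $2AB'-3BA'\neq0$ forces $B\neq0$, hence $0\notin\PostCrit_{\phi_{E,2}}$ and $\mathbf{e}_\infty(\pi^m)=1$, is a genuine simplification over the paper's $\mathbf{e}\leq2$ bound for $\pi$ and worth keeping.
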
 
\begin{proof}[(Proof of Theorem \ref{thm:lattes})] To avoid overly cumbersome notation, we simply write $\phi$ for the Latt\'{e}s map corresponding to multiplication by $2$. To prove Theorem \ref{thm:lattes}, we proceed as in the proof of Theorem \ref{thm:Silv}. However, there are two differences. First, the point at infinity is always in the post-critical orbit of $\phi$. Therefore, we must show that the ramification index $e_{\phi^m}(\beta)$ for any $\beta\in\overline{K}$ satisfying $\phi^m(\beta)=\infty$, is not too large. 
\begin{lem}{\label{ram:lattes}} Let $E$ be an elliptic curve and let $\phi_{E,n}=\phi$ be a Latt\'{e}s map associated to multiplication by $n$ on $E$. Fix $Q\in\mathbb{P}^1(\overline{K})$, and suppose that $\beta\in\mathbb{P}^1(\overline{K})$ satisfies $\phi^m(\beta)=Q$ for some iterate $m\geq1$. Then $e_{\phi^m}(\beta)\leq2$.  
\end{lem}
\begin{proof}[(Proof of Lemma \ref{ram:lattes})] It is known that $\phi$ has the following properties: every critical point of $\phi$ is simple (ramification degree $2$) and $\phi$ has exactly four post-critical points, non of which is also critical; In fact, in characteristic zero, these properties completely characterize the Latt\'{e}s maps \cite[P. 247, Remark 3.6 ]{Milnor}. Now suppose that $\phi^{n_1}(\beta)=\gamma_1$ and  $\phi^{n_2}(\beta)=\gamma_2 $ for some integers $0\leq n_1<n_2\leq m-1$ and some critical points $\gamma_1$ and $\gamma_2$ of $\phi$. Then \[\phi^{n_2-n_1}(\gamma_1)=\phi^{n_2-n_1}(\phi^{n_1}(\beta))=\phi^{n_2}(\beta)=\gamma_2,\] a contradiction of the fact that no critical points of $\phi$ are post-critical. On the other hand, ramification indices are multiplicative: \[e_\beta(\phi^m)=e_\beta(\phi) e_{\phi(\beta)}(\phi)\dots \,e_{\phi^{m-1}(\beta)}(\phi).\] Therefore, at most one term in the product above is larger than $1$, and if such a term exists, then it must be equal to $2$ (since every critical point of $\phi$ is simple).        
\end{proof}  
The second difference between the proof of Theorem \ref{thm:Silv} and the proof of Theorem \ref{thm:lattes} is that we must adapt Lemma \ref{lem:Riccati} to include information about $\phi$ and its second iterate. The reason is that the system of linear equations $(M_{\phi},R_{\phi})$ has a unique solution in this case (essentially because $\phi$ has small degree and relatively non-generic coefficients). However, we can remedy this by passing to a larger iterate.
\begin{lem}{\label{riccati:lattes}} Let $E$ and $\phi$ be as in Theorem \ref{thm:lattes} and assume that $2AB'-3BA'\neq0$. If $\beta\in K^{\sep}$ is such that $\beta$, $\phi(\beta)$ and $\phi^2(\beta)$ all satisfy a Riccati equation, then $[K(\beta):K]\leq 32$.  
\end{lem} 
\begin{proof}[(Proof of Lemma \ref{riccati:lattes})] Suppose that $\beta$, $\phi(\beta)$ and $\phi^2(\beta)$ all satisfy a Riccati equation, and that $[K(\beta):K]>32$. In particular, since $\beta$ and $\phi(\beta)$ both satisfy a Riccati equation, the proof of Lemma \ref{lem:Riccati} implies that the polynomial $P_{\phi,\beta}$ of degree  at most $8$ defined in (\ref{poly}) must be identically zero. Hence, the system of equations $(M_\phi, R_\phi)$ must have a solution. On the other hand, the submatrix $M_E^{(1)}$ obtained from $M_\phi$ by deleting the bottom row is given by 
\begin{equation*} M_E^{(1)}:=
\begin{pmatrix}
    -1&0 & 0&4&0&0\\
   0&-4&0&0&4&0\\
   4A&0 &-16 &20A &0 &4\\ 
   16B &4A &0 & 80B &20A &0\\
   -6A^2& 28B& 32A&-20A^2& 80B&20A\\
   -32AB& 4A^2& -32B&-16AB &-20A^2& 80B
\end{pmatrix} 
\end{equation*} 
and we compute that the determinant of $M_E^{(1)}$ is $2^{15}3^{4}B(4A^3+27B^2)\neq0$; note that this quantity is nonzero since $E$ is non-singular, $K$ does not have characteristic $2$ or $3$, and $2AB'-3BA'\neq0$. In particular, there are unique coefficients $(a,b,c,e,f,g)\in K^6$, defined by (\ref{diff1}), that force the polynomial $P_{\phi,\beta}$ to be identically zero. Specifically, $\beta$ must satisfy the differential equation 
\begin{equation}{\label{diff:lattes1}}
\beta'=\frac{(-6AB' + 9BA')}{(4A^3 + 27B^2)}\beta^2+\frac{(2A^2A' + 9BB')}{(4A^3 + 27B^2)}\beta+\frac{(-4A^2B' + 6ABA')}{(4A^3 + 27B^2)},  \vspace{.1cm} 
\end{equation}
and $\phi(\beta)$ must satisfy the equation \vspace{.15cm} 
\begin{equation}{\label{diff:lattes2}}
\;\;\;\; \phi(\beta)'=\frac{(-3/2AB' + 9/4BA')}{(4A^3 + 27B^2)}\phi(\beta)^2+\frac{(2A^2A' + 9BB')}{(4A^3 + 27B^2)}\phi(\beta)+\frac{(-5/2A^2B' + 15/4ABA')}{(4A^3 + 27B^2)}. 
\end{equation} 
\\
\indent We now repeat this argument for $\phi(\beta)$ and $\phi^2(\beta)$. Namely, since $\phi(\beta)$ and $\phi^2(\beta)$ both satisfy a Riccati equation and $[K(\beta):K]> 32$, the proof of Lemma \ref{lem:Riccati} implies that the polynomial $P_{\phi,\phi(\beta)}\in K[x]$ of degree at most eight must be identically zero; to see this, note that \vspace{.1cm}
\[32< [K(\beta):K]=\big[K(\beta):K(\phi(\beta)\big]\cdot \big[K(\phi(\beta)):K\big]\leq 4\cdot \big[K(\phi(\beta)):K\big],\vspace{.1cm}\]
and hence $\phi(\beta)$ has degree at least $9$ over $K$. Therefore, the Riccati coefficients associated to $\phi(\beta)$ and $\phi^2(\beta)$ must also solve the linear system $(M_\phi, R_\phi)$. On the other hand, we have shown that this system has a unique solution, from which we deduce that $\phi(\beta)$ must satisfy the differential equation: \vspace{.05cm} 
\begin{equation}{\label{diff:lattes3}}
\phi(\beta)'=\frac{(-6AB' + 9BA')}{(4A^3 + 27B^2)}\phi(\beta)^2+\frac{(2A^2A' + 9BB')}{(4A^3 + 27B^2)}\phi(\beta)+\frac{(-4A^2B' + 6ABA')}{(4A^3 + 27B^2)}. \vspace{.05cm} 
\end{equation}
We now equate (\ref{diff:lattes2}) and (\ref{diff:lattes3}) and see that $\phi(\beta)$ satisfies the algebraic equation \vspace{.25cm}    
\begin{equation}{\label{alg:lattes1}} 
0=\frac{(-9/2AB' + 27/4BA')}{(4A^3 + 27B^2)}\phi(\beta)^2+\frac{(-3/2A^2B' + 9/4ABA')}{(4A^3 + 27B^2)}. \vspace{.1cm} 
\end{equation} 
However, the leading term above is non-zero since $2AB'-3BA'\neq0$. Hence, (\ref{alg:lattes1}) contradicts the lower bound $[K(\phi(\beta)):K]>8$. In particular, we deduce that if $\beta$, $\phi(\beta)$ and $\phi^2(\beta)$ all satisfy a Riccati equation, then $[K(\beta):K]\leq32$ as claimed.    
\end{proof}
We now proceed with the proof of Theorem \ref{thm:lattes} much as in Section \ref{sec:Main}. Specifically, we will show that the set $N(\phi,\alpha,\epsilon)$, defined in (\ref{set}), is finite for $\alpha=x(P)$ and all $\epsilon>0$ sufficiently small. To do this, note that Lemma \ref{lem:inv} and Lemma \ref{ram:lattes} applied to $Q=\infty$, together imply that for all $m\geq1$ and $n>m$ there exists $\beta_m\in\overline{K}_v$ such that:
\begin{equation}{\label{ineq:lattes1}}
\;\;\;\;\;\;\;\;\;\;\phi^m(\beta_m)=\infty\;\;\;\;\;\;\;\;\;\;\text{and}\;\;\;\;\;\;\;\; \rho(\phi^n(\alpha),\infty)\geq C(\phi,m)\cdot \rho(\phi^{n-m}(\alpha), \beta_m)^2
\end{equation} 
for some constant $C(\phi,m)=C(\phi^m,\infty)$. Now suppose for a contradiction that $N(\phi,\alpha,\epsilon)$ is infinite and that $0<\epsilon\leq\frac{1}{5}$. In what follows, we fix a large integer 
\[m\geq\max\Big\{\log_4\Big(\frac{48}{\epsilon^2}\Big)+1,5\Big\}.\] 
Now, as in $(\ref{ineq4})$, if $\beta_m\neq\infty$, then for all $n\in N(\phi,\alpha, \epsilon)$ sufficiently large, 
\begin{equation}{\label{ineq:lattes2}}
\frac{1}{|a_n|^{\frac{1}{2}+\epsilon^2}}\geq C(\phi,m)\cdot \big|\phi^{n-m}(\alpha)-\beta_m\big|^2 \cdot \frac{\rho(\beta_m,\infty)^2}{2}.
\end{equation}  
This fact follows from Lemma \ref{lem:comp}, $(\ref{ineq:lattes1})$ and the fact that $N(\phi,\alpha, \epsilon)$ is infinite.  

On the other hand, if $\beta_m\in\overline{\mathbb{F}}_q(t)$, $[K(\beta_m):K]\leq 32$ or $\beta_m=\infty$, then the proof of Theorem \ref{thm:lattes} follows the proof of Theorem \ref{thm:Silv} nearly verbatim; see Cases (1), (2) and (4) of the proof of Theorem \ref{thm:Silv} above. As in these cases, the Liouville bound \cite{Mahler} on the diophantine approximation exponent of $\beta_m$ suffices to prove that $N(\phi,\alpha,\epsilon)$ is finite: the fact that $|\phi^{n-m}(\alpha)-\beta_m|$ appears to a square factor in (\ref{ineq:lattes2}) in the Latt\'{e}s case only changes the relevant bounds by a constant factor. Specifically, we obtain bounds of the form 
\[(1/2+\epsilon^2)\cdot h(\phi^n(\alpha))\leq 64 \cdot h(\phi^{n-m}(\alpha))+\kappa_{16}(\phi,m)\]
(or a stronger bound with $64$ replaced with $2$), in any of these special cases. In particular, standard properties of the (dynamical) canonical height $\hat{h}_\phi$ in \cite[Theorem 3.20]{Silv-Dyn} imply that  
\[n\leq \log_4\bigg(\frac{\kappa_{17}(\phi,\alpha,m,\epsilon)}{3}\bigg)+1.\] 
Hence, $N(\phi,\alpha,\epsilon)$ is finite as claimed.  

Therefore, we may assume that $\beta_m\neq\infty$, $\beta_m\not\in\overline{\mathbb{F}}_q(t)$ and $[K(\beta_m):K]>32$. In particular, Lemma \ref{riccati:lattes} implies that at least one of the algebraic functions $\beta_m$, $\phi(\beta_m)$ or $\phi^2(\beta_m)$ does not satisfy a Riccati equation; we now follow Case (3) of the the proof of Theorem \ref{thm:Silv}. Fix $0\leq i\leq2$ so that $\beta_m^{(i)}:=\phi^i(\beta_m)$ does not satisfy a Riccati equation. Then Lemma \ref{lem:lip}, Lemma \ref{lem:comp}, and the bound in (\ref{ineq:lattes1}) imply that 
\[\frac{1}{|a_n|^{\frac{1}{2}+\epsilon^2}}\geq C(\phi,m)\cdot\Res(\phi^i)^2\cdot \big|\phi^{n-m+i}(\alpha)-\beta_m^{(i)}\big|^2 \cdot \frac{\rho(\beta_m^{(i)},\infty)^2}{2}\vspace{.1cm}\]  
for all $n\in N(\phi,\alpha,\epsilon)$ sufficiently large. In particular, we deduce 
\begin{equation}{\label{ineq:lattes3}} 
(1/2+\epsilon^2)\cdot h(\phi^n(\alpha))\leq (d(\beta_m^{(i)})+3)\cdot h(\phi^{n-m+i}(\alpha))+\kappa_{18}(\phi,m,i)\vspace{.1cm}
\end{equation} 
from the Osgood bound for $\beta_m^{(i)}$ in Theorem \ref{Osgood}; here $d(\beta_m^{(i)})$ is the algebraic degree of $\beta_m^{(i)}$ over $K$. Normally, one cannot improve the trivial bound $d(\beta_m^{(i)})\leq \deg(\phi)^{m-i}$ for generic rational functions $\phi$. However, Latt\'{e}s maps are special in many ways. In particular, it follows from properties of division polynomials that 
\begin{equation}{\label{degree}}
d(\beta_m^{(i)})\leq\frac{1}{2}\deg(\phi)^{m-i},
\end{equation} 
whenever $\phi$ is a Latt\'{e}s map associated to multiplication on an elliptic curve \cite[Exercise 3.7]{Silv-Ell}. Therefore, we see that (\ref{ineq:lattes3}) and basic properties of the canonical height imply that
\[(1/2+\epsilon^2)\cdot \hat{h}_\phi(\alpha)\cdot 4^n\leq 1/2\cdot\hat{h}_\phi(\alpha)\cdot 4^n+ 3\cdot \hat{h}_\phi(\alpha)\cdot 4^{n-m+i} + \kappa_{19}(\phi,m,i,\epsilon).\]
After dividing by $\hat{h}_\phi(\alpha)$, a non-negative quantity since $P\in E(K)$ is non-torsion, we see that 
\[4^n\leq \frac{3}{\,\epsilon^2}\cdot 4^{n-m+i}+\kappa_{20}(\phi,\alpha,m,i,\epsilon)\leq \frac{3}{\,\epsilon^2}\cdot 4^{n-m+2}+\kappa_{20}(\phi,\alpha,m,i,\epsilon)  \vspace{.1cm}\] 
since $0\leq i\leq 2$. On the other hand, $m>\log_4(48/\epsilon^2)+1$ by assumption, so that the bound above implies that  
\[4^n\leq 4^{n-1}+\kappa_{20}(\phi,\alpha,m,i,\epsilon).\vspace{.1cm}\]    
However, such an inequality forces $n\leq \log_4\big(\kappa_{20}(\phi,\alpha,m,i,\epsilon)/3\big)+1$. Hence, $N(\phi,\alpha,\epsilon)$ is finite as claimed.  

To finish the proof, we apply the same argument above to the rational map $\pi(x):=1/\phi(1/x)$ and the basepoint $\alpha:=1/x(P)$. Note first that Lemma \ref{ram:lattes} follows immediately for $\pi$ since ramification indices are invariant under conjugation; see \cite[Exercise 1.5]{Silv-Dyn}. Specifically, $e_{\pi^m}(\beta)\leq2$ for all $\beta\in \overline{K}$ satisfying $\pi^m(\beta)=\infty$, follows from Lemma \ref{ram:lattes} applied to $Q=0$. 

Likewise, as was the case for $\phi$, the linear system $(M_\pi, R_\pi)$ has a unique solution vector $(a,b,c,e,f,g)\in K^6$. In particular, if $\beta\in K^{\sep}$ is such that $\beta$ and $\pi(\beta)$ both satisfy a Riccati equation and $[K(\beta):K]> 32$, then \vspace{.075cm}  
\begin{equation*}{\label{diff:lattes4}}
\beta'=\frac{(4A^2B' - 6AA'B)}{(4A^3 + 27B^2)}\beta^2+\frac{(-2A^2A' - 9BB')}{(4A^3 + 27B^2)}\beta+\frac{(6AB' - 9A'B)}{(4A^3 + 27B^2)},  \vspace{.075cm} 
\end{equation*}
and $\pi(\beta)$ must satisfy the equation \vspace{.075cm} 
\begin{equation*}{\label{diff:lattes5}}
\;\;\;\; \pi(\beta)'=\frac{(5/2A^2B' - 15/4AA'B)}{(4A^3 + 27B^2)}\pi(\beta)^2+\frac{(-2A^2A' - 9BB')}{(4A^3 + 27B^2)}\pi(\beta)+\frac{(3/2AB' - 9/4A'B)}{(4A^3 + 27B^2)}. \vspace{.075cm}
\end{equation*}   
Repeating this argument for $\pi(\beta)$, we see that if $\beta\in K^{\sep}$ is such that $\beta$, $\pi(\beta)$ and $\pi^2(\beta)$ all satisfy a Riccati equation and $[K(\beta):K]>32$, then $\pi(\beta)$ satisfies the algebraic equation, \vspace{.1cm}  
\begin{equation*}{\label{alg:lattes}} 
0=\frac{(3/2A^2B' - 9/4AA'B)}{(4A^3 + 27B^2)}\pi(\beta)^2+\frac{(9/2AB' - 27/4A'B)}{(4A^3 + 27B^2)}. \vspace{.075cm} 
\end{equation*} 
However, the leading term above is non-zero since $2AB'-3BA'\neq0$; hence the equation above contradicts the lower bound $[K(\beta):K]>32$. In particular, if the first three terms of $\Orb_\pi(\beta)$ all satisfy a Riccati equation, then $\beta$ has bounded degree over $K$. From here, the integrality estimates for the $\pi$-orbit of $\alpha=1/x(P)$ follow exactly as the estimates for the $\phi$-orbit of $\alpha=x(P)$.   
\end{proof}
\begin{rmk} Most of the proof of Theorem \ref{thm:lattes} holds for general Latt\'{e}s maps. For instance the key facts, Lemma \ref{ram:lattes} and (\ref{degree}), are true in general. On the other hand, it is unlikely that one can establish a version of Lemma \ref{riccati:lattes} without explicit formulas (more than just recursive definitions \cite[Exercise 3.7]{Silv-Ell}). However, it may be possible to side-step this problem using Galois representations. More specifically, the author wonders whether Serre's open image theorem (established by Igusa \cite{Igusa} in this setting) implies that the $x$-coordinates of large $\ell$-powered torsion points cannot satisfy a Riccati equation; here $\ell$ is a prime number coprime to $p$, the characteristic of $K$. Such a statement, coupled with our techniques, likely implies Theorem \ref{thm:lattes} for all Latt\'{e}s maps. Alternatively, a strong form of Siegel's theorem for non-isotrivial elliptic curves in characteristic $p$ likely follows from \cite[Theorem 2]{Voloch} for all isogenies of degree prime to $p$.           
\end{rmk} 
\vspace{.1cm}
\textbf{Acknowledgements:} It is a pleasure to thank Joseph Silverman, Lucien Szpiro, and Felipe Voloch for the useful discussions related to the work in this paper.  

\end{document}